\providecommand{\U}[1]{\protect\rule{.1in}{.1in}}
\providecommand{\U}[1]{\protect\rule{.1in}{.1in}}
\providecommand{\U}[1]{\protect\rule{.1in}{.1in}}
\providecommand{\U}[1]{\protect\rule{.1in}{.1in}}
\providecommand{\U}[1]{\protect\rule{.1in}{.1in}}
\providecommand{\U}[1]{\protect\rule{.1in}{.1in}}
\providecommand{\U}[1]{\protect\rule{.1in}{.1in}}
\providecommand{\U}[1]{\protect\rule{.1in}{.1in}}
\providecommand{\U}[1]{\protect\rule{.1in}{.1in}}
\providecommand{\U}[1]{\protect\rule{.1in}{.1in}}
\providecommand{\U}[1]{\protect\rule{.1in}{.1in}}
\providecommand{\U}[1]{\protect\rule{.1in}{.1in}}
\providecommand{\U}[1]{\protect\rule{.1in}{.1in}}
\providecommand{\U}[1]{\protect\rule{.1in}{.1in}}
\newtheorem{theorem}{Theorem}
\newtheorem*{theorem*}{Theorem}
\newtheorem{lemma}[theorem]{Lemma}
\newtheorem{problem}[theorem]{Problem}
\newtheorem{proposition}[theorem]{Proposition}
\newtheorem*{proposition*}{Proposition}
\begin{document}
\date{\today}
\title[Good Pair of Lattices]{Pairs of Full-Rank Lattices With Parallelepiped-Shaped Common Fundamental Domains}
\author[H. Burgiel]{Heidi Burgiel}
\address{Dept.\ of Mathematics \& Computer Science\\
Bridgewater State University\\
Bridgewater, MA 02325 U.S.A.\\
 }
\email{vignon.oussa@bridgew.edu}
\author[V. Oussa]{Vignon Oussa}
\address{Dept.\ of Mathematics \\
Bridgewater State University\\
Bridgewater, MA 02325 U.S.A.\\
 }
\keywords{Lattices, Fundamental Domains, Tiling, Packing, Orthonormal bases}
\subjclass[2000]{52C22,52C17, 42B99, 42C30.}
\maketitle

\begin{abstract}
We provide a complete characterization of pairs of full-rank lattices in
$\mathbb{R}^{d}$ which admit common connected fundamental domains of the type
$N\left[  0,1\right)  ^{d}$ where $N$ is an invertible matrix of order $d.$
Using our characterization, we construct several pairs of lattices of the type
$\left(  M\mathbb{Z}^{d},\mathbb{Z}^{d}\right)  $ which admit a common
fundamental domain of the type $N\left[  0,1\right)  ^{d}.$ Moreover, we show
that for $d=2,$ there exists an uncountable family of pairs of lattices of the
same volume which do not admit a common connected fundamental domain of the
type $N\left[  0,1\right)  ^{2}.$

\end{abstract}


\section{Introduction}

Let $\left\{  e_{k}:1\leq k\leq d\right\}  $ be a basis for the vector space
$\mathbb{R}^{d}.$ A full-rank lattice $\Gamma$ is a discrete subgroup of
$\mathbb{R}^{d}$ which is generated by the set $\left\{  e_{k}:1\leq k\leq
d\right\}  .$ The number of generators of the lattice is called the rank of
the lattice, and the set $\left\{  e_{k}:1\leq k\leq d\right\}  $ is called a
basis for the lattice. Adopting the convention that vectors in $\mathbb{R}%
^{d}$ are written as $d\times1$ matrices, it is convenient to describe the
lattice $\Gamma$ as $\Gamma=M\mathbb{Z}^{d}=\left\{  Mk:k\in\mathbb{Z}%
^{d}\right\}  $ where the $j^{th}$ column of the matrix $M$ corresponds to the
vector $e_{j}.$ For a full-rank lattice $M\mathbb{Z}^{d},$ the positive number
$\left\vert \det M\right\vert $ which is equal to the volume of the
parallelepiped $M\left[  0,1\right)  ^{d}$ is conveniently called the volume
of the lattice $M\mathbb{Z}^{d}.$

Let $E$ be a Lebesgue measurable subset of $\mathbb{R}^{d}.$ We say that $E$
packs $\mathbb{R}^{d}$ by $\Gamma$ if and only if for any $\lambda,\gamma
\in\Gamma,\gamma\neq\lambda,$ $\left(  E+\lambda\right)  \cap\left(
E+\gamma\right)  =\emptyset.$ Moreover, we say that $E$ is a measurable
fundamental domain of $\Gamma$ if and only if for any $\lambda,\gamma\in
\Gamma,\gamma\neq\lambda,$ $\left(  E+\lambda\right)  \cap\left(
E+\gamma\right)  =\emptyset$ and $\cup_{\gamma\in\Gamma}\left(  E+\gamma
\right)  =\mathbb{R}^{d}.$ It is worth noticing that if $E$ packs
$\mathbb{R}^{d}$ by $\Gamma$ and if the Lebesgue measure of $E$ is equal to
the volume of $\Gamma$ then $E$ is a fundamental domain of $\Gamma.$

According to a remarkable result of Deguang and Wang (Theorem $1.1$ of
\cite{Han Yang Wang}), it is known that two full-rank lattices in
$\mathbb{R}^{d}$ of the same volume have a common fundamental domain. This
result has profound applications in time-frequency analysis
\cite{Heil,Pfander, Grog}. In \cite{Han Yang Wang}, the authors provide a
general procedure for constructing a fundamental domain for any given pair of
lattices of the same volume. However, it is often the case that the
fundamental domains obtained in \cite{Han Yang Wang} are disconnected,
unbounded and difficult to describe. It is therefore natural to ask if it is
possible to characterize pairs of lattices which admit `simple' common
fundamental domains.

Let us be more precise about what we mean by a `simple' fundamental domain for
a lattice. Let $\Gamma_{1},$ and $\Gamma_{2}$ be two full-rank lattices of the
same volume. We say that the pair $\left(  \Gamma_{1},\Gamma_{2}\right)  $ is
a \textbf{good pair of lattices} if and only if there exists an invertible
matrix $N$ of order $d$ such that the parallelepiped $N\left[  0,1\right)
^{d}$ is a common fundamental domain for $\Gamma_{1},\Gamma_{2}.$ Clearly,
such a fundamental domain is a simple set in the sense that it is connected,
star-shaped, convex and is easily described. Although the investigation of
good pairs of lattices is an interesting problem on its own right, it is also
worth noting that common fundamental domains for pairs of lattices which are
bounded and star-shaped are of central importance in the construction of
smooth frames which are compactly supported \cite{Pfander}.

\subsection{Short overview of the paper}

Our main objective in this paper is to provide solutions to the following problems:

\begin{problem}
\label{One}Is it possible to obtain a simple characterization of good pair of lattices?
\end{problem}

\begin{problem}
\label{Two} For which unimodular matrices $M$ is $\left(  M\mathbb{Z}%
^{d},\mathbb{Z}^{d}\right)  $ a good pair of lattices (or not)?
\end{problem}

On one hand, we are able to address Problem \ref{One} in a way that we judge
is satisfactory. On the other, while we are able to describe several
non-trivial families of good pairs of lattices of the type $\left(  M
\mathbb{Z}^{d},\mathbb{Z}^{d}\right)  $, to the best of our knowledge Problem
\ref{Two} is still open.

Here is a summary of the results obtained in this paper:

\begin{itemize}
\item We present a simple yet powerful characterization of good pairs of
lattices in Proposition \ref{Main}, and we describe various properties
(Proposition \ref{changebasis}) of good pairs of lattices.

\item Addressing Problem \ref{Two}, in Proposition \ref{unipotent}, and
Proposition \ref{order3} we construct several non-trivial families of good
pairs of lattices of the type $\left(  M\mathbb{Z}^{d},\mathbb{Z}^{d}\right)
$ in any given dimension. Moreover, in Proposition \ref{notgood} we establish
the existence of an uncountable collection of pairs of lattices in dimension 2
which have the same volume and are not good pairs.

\item We provide methods that can be exploited to construct good pairs of
lattices in higher dimensions from good pairs of lattices in lower dimensions
(Proposition \ref{tensor}.)
\end{itemize}

Among several results obtained in this work, here are the main ones.

\begin{proposition}
\label{Main} Let $\mathbf{0}$ be the zero vector in $\mathbb{R}^{d}.$ Let
$\Gamma_{1}=M_{1}\mathbb{Z}^{d}$ and $\Gamma_{2}=M_{2}\mathbb{Z}^{d}$ be two
full-rank lattices of the same volume. $\left(  \Gamma_{1},\Gamma_{2}\right)
$ is a good pair of lattices if and only if there exists a unimodular matrix
$N$ ($\left\vert \det N\right\vert =1$) such that $N\left(  -1,1\right)
^{d}\cap\left(  M_{1}^{-1}M_{2}\right)  \mathbb{Z}^{d}=\left\{  \mathbf{0}%
\right\}  $ and $N\left(  -1,1\right)  ^{d}\cap\mathbb{Z}^{d}=\left\{
\mathbf{0}\right\}  .$
\end{proposition}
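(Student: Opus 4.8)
The plan is to reduce the two-lattice tiling problem to a packing problem for a single box and then exploit the elementary identity $N[0,1)^{d}-N[0,1)^{d}=N(-1,1)^{d}$.

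First I would normalize the pair by applying $M_{1}^{-1}$. Since an invertible linear map carries (measurable) fundamental domains to fundamental domains, a set $E$ is a common fundamental domain for $(\Gamma_{1},\Gamma_{2})$ if and only if $M_{1}^{-1}E$ is a common fundamental domain for $M_{1}^{-1}\Gamma_{1}=\mathbb{Z}^{d}$ and $M_{1}^{-1}\Gamma_{2}=(M_{1}^{-1}M_{2})\mathbb{Z}^{d}$. Moreover $M_{1}^{-1}(N[0,1)^{d})=(M_{1}^{-1}N)[0,1)^{d}$, and because $\Gamma_{1}$ and $\Gamma_{2}$ have the same volume we have $|\det(M_{1}^{-1}M_{2})|=1$ and $|\det(M_{1}^{-1}N)|=|\det N|/|\det M_{1}|$, which equals $1$ exactly when $|\det N|$ equals the common volume of the two lattices. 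Thus, writing $L:=M_{1}^{-1}M_{2}$, the pair $(\Gamma_{1},\Gamma_{2})$ is good if and only if there is a unimodular matrix $N$ such that $N[0,1)^{d}$ is a common fundamental domain for $\mathbb{Z}^{d}$ and $L\mathbb{Z}^{d}$; conversely any such $N$ produces the invertible matrix $M_{1}N$, of order $d$, whose associated parallelepiped is a common fundamental domain for the original pair.

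Next I would prove the packing criterion for a parallelepiped. For any lattice $\Lambda$ and any $E=N[0,1)^{d}$, a translation argument (set $\mu=\lambda-\gamma$) shows that $E$ packs $\mathbb{R}^{d}$ by $\Lambda$ if and only if $\mu\notin E-E$ for every $\mu\in\Lambda\setminus\{\mathbf{0}\}$, i.e. $(E-E)\cap\Lambda=\{\mathbf{0}\}$; and since $[0,1)-[0,1)=(-1,1)$ coordinatewise and $N$ is linear, $E-E=N(-1,1)^{d}$. Hence $N[0,1)^{d}$ packs $\mathbb{R}^{d}$ by $\Lambda$ if and only if $N(-1,1)^{d}\cap\Lambda=\{\mathbf{0}\}$. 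Applying this with $\Lambda=\mathbb{Z}^{d}$ and with $\Lambda=L\mathbb{Z}^{d}$ (both of volume $1=|\det N|$), and invoking the remark from the introduction that a set which packs a lattice and has Lebesgue measure equal to the volume of that lattice is automatically a fundamental domain, we conclude that $N[0,1)^{d}$ is a common fundamental domain for $\mathbb{Z}^{d}$ and $L\mathbb{Z}^{d}$ precisely when $N(-1,1)^{d}\cap\mathbb{Z}^{d}=\{\mathbf{0}\}$ and $N(-1,1)^{d}\cap L\mathbb{Z}^{d}=\{\mathbf{0}\}$.

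Combining the two reductions with $L=M_{1}^{-1}M_{2}$ yields the proposition. I do not expect a serious obstacle here: the only points requiring care are the half-open versus open boundary in the difference-set identity and bookkeeping with the determinants so that the normalized matrix is genuinely unimodular; the one nontrivial input — that equal-measure packing forces tiling — is the result already quoted in the introduction.
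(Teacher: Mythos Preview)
Your proposal is correct and follows essentially the same route as the paper: first reduce $(\Gamma_{1},\Gamma_{2})$ to $(\mathbb{Z}^{d},(M_{1}^{-1}M_{2})\mathbb{Z}^{d})$ via the linear change of variables $M_{1}^{-1}$ (the paper's Proposition~\ref{changebasis}, Part~2), then characterize when $N[0,1)^{d}$ is a common fundamental domain by the packing criterion $(E-E)\cap\Lambda=\{\mathbf{0}\}$ together with the identity $N[0,1)^{d}-N[0,1)^{d}=N(-1,1)^{d}$ (the paper's Lemmas~\ref{common} and~\ref{good pair}). Your explicit tracking of the determinant normalization is a useful clarification but not a departure in method.
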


Notice that for any given invertible matrix $M$ of order $d,$ the zero vector
is always an element of the set $\left(  \left(  M_{1}^{-1}M_{2}\right)
\mathbb{Z}^{d}\cup\mathbb{Z}^{d}\right)  \cap M\left(  -1,1\right)  ^{d}.$
Thus, $\left(  M_{1}\mathbb{Z}^{d},M_{2}\mathbb{Z}^{d}\right)  $ is a good
pair of full-rank lattices if and only if there exists a matrix $N$ of order
$d$ such that $\left\vert \det N\right\vert =1$ and the set $\left(  \left(
\left(  M_{1}^{-1}M_{2}\right)  \mathbb{Z}^{d}\right)  \cup\mathbb{Z}%
^{d}\right)  \cap\left(  N\left(  -1,1\right)  ^{d}\right)  $ is a singleton.
We also observe that the condition described in Proposition \ref{Main} is
easily checked (especially in lower dimensional vector spaces) and will be
exploited to derive other results. Additionally, we would like to point out
that since the volume of the set $N\left(  -1,1 \right)  ^{d}$ must be equal
to $2^{d},$ according to a famous theorem of Minkowski (Theorem $2,$
\cite{Til}) the closure of the set $N\left(  -1,1 \right)  ^{d}$ must contain
points of the lattices $\left(  M_{1}^{-1}M_{2}\right)  \mathbb{Z}^{d},
\mathbb{Z}^{d}$ other than the zero vector. Thus, $\left(  M_{1}\mathbb{Z}%
^{d},M_{2}\mathbb{Z}^{d}\right)  $ is a good pair of lattices if and only if
there exists a unimodular matrix $N$ such that the only nonzero elements of
$\left(  M_{1}^{-1}M_{2}\right)  \mathbb{Z}^{d}$ and $\mathbb{Z}^{d}$ which
belong to the closure of the set $N\left(  -1,1\right)  ^{d}$ are on the
boundary of the $N\left(  -1,1\right)  ^{d}$.

\begin{proposition}
\label{unipotent}Let $M$ be a triangular matrix of order $d$ with ones on the
diagonal and let $P,Q$ be unimodular integral matrices of order $d.$ Then
$\left(  PMQ\mathbb{Z}^{d},\mathbb{Z}^{d}\right)  $ is a good pair of lattices
with common fundamental domain $PM\left[  0,1\right)  ^{d}.$
\end{proposition}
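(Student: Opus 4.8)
The plan is to verify directly that $E:=PM[0,1)^{d}$ is a common fundamental domain for $\Gamma_{1}=PMQ\mathbb{Z}^{d}$ and $\Gamma_{2}=\mathbb{Z}^{d}$; this simultaneously proves that $(\Gamma_{1},\Gamma_{2})$ is good and exhibits the advertised domain, and it is precisely the conclusion one would also reach from Proposition \ref{Main} with the choice $N=Q^{-1}$ (which is unimodular since $Q\in\mathrm{GL}_{d}(\mathbb{Z})$). Since $M$ is triangular with unit diagonal, $\det M=1$, and since $P,Q\in\mathrm{GL}_{d}(\mathbb{Z})$ we get $\lvert\det(PMQ)\rvert=1=\lvert\det(PM)\rvert$; hence $\Gamma_{1}$ and $\Gamma_{2}$ have the same volume $1$ and $E$ has Lebesgue measure $1$. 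By the observation recalled in the introduction (a set that packs $\mathbb{R}^{d}$ by $\Gamma$ and has measure equal to the volume of $\Gamma$ is a fundamental domain for $\Gamma$), it suffices to show that $E$ packs $\mathbb{R}^{d}$ by $\Gamma_{1}$ and by $\Gamma_{2}$.

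For a parallelepiped $E=A[0,1)^{d}$ with $A$ invertible one has $E-E=A(-1,1)^{d}$, and $E$ packs $\mathbb{R}^{d}$ by a lattice $\Gamma$ if and only if $(E-E)\cap\Gamma=\{\mathbf{0}\}$ (both containing $\mathbf{0}$ trivially). So I must check
\[
PM(-1,1)^{d}\cap\mathbb{Z}^{d}=\{\mathbf{0}\}\qquad\text{and}\qquad PM(-1,1)^{d}\cap PMQ\mathbb{Z}^{d}=\{\mathbf{0}\}.
\]
Applying the invertible map $(PM)^{-1}=M^{-1}P^{-1}$ to both sides of each identity and using $P^{-1}\mathbb{Z}^{d}=\mathbb{Z}^{d}$ and $Q\mathbb{Z}^{d}=\mathbb{Z}^{d}$, these reduce respectively to
\[
(-1,1)^{d}\cap M^{-1}\mathbb{Z}^{d}=\{\mathbf{0}\}\qquad\text{and}\qquad(-1,1)^{d}\cap\mathbb{Z}^{d}=\{\mathbf{0}\}.
\]
The second identity is immediate, since $0$ is the only integer in $(-1,1)$.

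The crux is therefore the single lemma: if $M$ is triangular of order $d$ with ones on the diagonal, then $(-1,1)^{d}\cap M^{-1}\mathbb{Z}^{d}=\{\mathbf{0}\}$. The key point is that the set of upper (resp.\ lower) triangular matrices with unit diagonal is a group, hence closed under inversion, so $M^{-1}$ is again triangular with ones on the diagonal. Given $k\in\mathbb{Z}^{d}\setminus\{\mathbf{0}\}$, let $i$ be the extreme index for which $k_{i}\neq0$ — the largest such index when $M$ is upper triangular, the smallest when $M$ is lower triangular. Expanding the $i$-th coordinate of $M^{-1}k$ and using the triangular shape of $M^{-1}$ together with $k_{j}=0$ for every $j$ past $i$, the only surviving term is $(M^{-1})_{ii}k_{i}=k_{i}$, a nonzero integer; thus $\lvert(M^{-1}k)_{i}\rvert\geq1$ and $M^{-1}k\notin(-1,1)^{d}$. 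Hence no nonzero point of $M^{-1}\mathbb{Z}^{d}$ lies in $(-1,1)^{d}$.

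Putting these together, $E=PM[0,1)^{d}$ packs $\mathbb{R}^{d}$ under both $\Gamma_{1}$ and $\Gamma_{2}$ and consequently is a common fundamental domain for them, which is exactly the assertion. The only step requiring a modicum of care is the casework in the lemma on whether $M$ is upper or lower triangular; this is dispatched uniformly by ordering the coordinates to match the triangular type (equivalently, by conjugating with the order-reversing permutation matrix, which swaps the two types while preserving the unit-diagonal condition). Everything else is routine bookkeeping with images of $(-1,1)^{d}$ and of $\mathbb{Z}^{d}$ under invertible linear maps.
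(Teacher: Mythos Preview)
Your proof is correct and follows essentially the same approach as the paper: both reduce to showing $M(-1,1)^{d}\cap\mathbb{Z}^{d}=\{\mathbf{0}\}$ (equivalently, your $(-1,1)^{d}\cap M^{-1}\mathbb{Z}^{d}=\{\mathbf{0}\}$) via the triangular structure, and then invoke the change-of-basis facts to absorb $P$ and $Q$. The only difference is cosmetic: the paper establishes the key lemma by induction on $d$, whereas you carry out the same back-substitution in one stroke by selecting the extreme nonzero coordinate of $k$ and reading off $(M^{-1}k)_{i}=k_{i}$.
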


Put $\mathbf{p}=\left(  p_{1},\cdots,p_{d-1}\right)  \in\mathbb{R}^{d-1}$ and
define the matrix-valued functions $\mathbf{p}\mapsto M(\mathbf{p})$ and
$\mathbf{p}\mapsto N(\mathbf{p})$ as follows:
\[
M\left(  \mathbf{p}\right)  =\left[
\begin{array}
[c]{cccc}%
p_{1} & 1 &  & \\
& \ddots & \ddots & \\
&  & p_{d-1} & 1\\
&  &  & {\prod\limits_{k=1}^{d-1}}\frac{1}{p_{k}}%
\end{array}
\right]  ,\text{ }N\left(  \mathbf{p}\right)  =\left[
\begin{array}
[c]{ccccc}
&  &  &  & 1\\
&  &  & 1 & p_{2}\\
&  & \udots & \udots & \\
& 1 & p_{d-1} &  & \\
1 & {\prod\limits_{k=1}^{d-1}}\frac{1}{p_{k}} &  &  &
\end{array}
\right]  .
\]
Furthermore, given $\mathbf{m}=\left(  m_{1},m_{2},\cdots,m_{d-1}\right)
\in\mathbb{Z}^{d-1}$, we define the matrix-valued function:
\[
\mathbf{m}\mapsto D\left(  \mathbf{m}\right)  =\left[
\begin{array}
[c]{ccccc}%
\frac{1}{m_{1}} &  &  &  & \\
& \frac{1}{m_{2}} &  &  & \\
&  & \ddots &  & \\
&  &  & \frac{1}{m_{d-1}} & \\
&  &  &  & {\prod\limits_{k=1}^{d-1}}m_{k}%
\end{array}
\right]  .
\]

\begin{proposition}
\label{order3}Let $P,Q$ be unimodular integral matrices of order $d,$ and let
$U,V$ be unimodular integral matrices of order $2.$

\begin{enumerate}
\item If $p_{1},\cdots,p_{d-1}\neq0$ then $\left(  \left(  PM\left(
\mathbf{p}\right)  Q\right)  \mathbb{Z}^{d},\mathbb{Z}^{d}\right)  $ is a good
pair of lattices with common fundamental domain $PN\left(  \mathbf{p}\right)
\left[  0,1\right)  ^{d}.$

\item If $m_{1}m_{2}\cdots m_{d-1}\neq0$ then $\left(  \left(  PD\left(
\mathbf{m}\right)  Q\right)  \mathbb{Z}^{d},\mathbb{Z}^{d}\right)  $ is a good
pair of lattices.

\item If $m,n$ are nonzero integers such that $\gcd\left(  m,n\right)
=1\ $then
\[
\left(  \left(  U\left[
\begin{array}
[c]{cc}%
\frac{m}{n} & 0\\
0 & \frac{n}{m}%
\end{array}
\right]  V\right)  \mathbb{Z}^{2},\mathbb{Z}^{2}\right)
\]
is a good pair of lattices.
\end{enumerate}
\end{proposition}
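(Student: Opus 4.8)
The plan is to prove (1) directly, and then to reduce (2) to (1) and (3) to (2). For part (1): since $P$ and $Q$ are integral unimodular we have $Q\mathbb{Z}^{d}=\mathbb{Z}^{d}$ and $P\mathbb{Z}^{d}=\mathbb{Z}^{d}$, so it suffices to show that $N(\mathbf{p})[0,1)^{d}$ is a common fundamental domain for $M(\mathbf{p})\mathbb{Z}^{d}$ and $\mathbb{Z}^{d}$; applying the linear bijection $P$ then shows $PN(\mathbf{p})[0,1)^{d}$ is a common fundamental domain for $PM(\mathbf{p})\mathbb{Z}^{d}=PM(\mathbf{p})Q\mathbb{Z}^{d}$ and $\mathbb{Z}^{d}$. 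Both $M(\mathbf{p})$ and $N(\mathbf{p})$ have determinant of absolute value $1$ (the latter because it is anti-triangular with $1$'s on its anti-diagonal), so by the measure-theoretic observation recorded in the introduction it is enough to verify the two packing statements
\[
N(\mathbf{p})(-1,1)^{d}\cap\mathbb{Z}^{d}=\{\mathbf{0}\}\qquad\text{and}\qquad M(\mathbf{p})^{-1}N(\mathbf{p})(-1,1)^{d}\cap\mathbb{Z}^{d}=\{\mathbf{0}\}:
\]
the first says that $N(\mathbf{p})[0,1)^{d}$ packs $\mathbb{R}^{d}$ by $\mathbb{Z}^{d}$, and the second (after applying $M(\mathbf{p})^{-1}$) that it packs $\mathbb{R}^{d}$ by $M(\mathbf{p})\mathbb{Z}^{d}$. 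The first is immediate from the shape of $N(\mathbf{p})$: the first coordinate of $N(\mathbf{p})x$ is $x_{d}$, forcing $x_{d}=0$ whenever $x\in(-1,1)^{d}$ and $N(\mathbf{p})x\in\mathbb{Z}^{d}$; the second coordinate then collapses to $x_{d-1}$, the third to $x_{d-2}$, and so on down the rows, a cascade forcing $x=\mathbf{0}$. For the second, the key is a column-by-column comparison of $M(\mathbf{p})$ and $N(\mathbf{p})$: one checks $N(\mathbf{p})e_{1}=e_{d}$, $N(\mathbf{p})e_{2}=M(\mathbf{p})e_{d}$, and $N(\mathbf{p})e_{j}=M(\mathbf{p})e_{d+2-j}$ for $3\le j\le d$. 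Hence $M(\mathbf{p})^{-1}N(\mathbf{p})$ sends $e_{j}\mapsto e_{d+2-j}$ for $2\le j\le d$ and sends $e_{1}\mapsto M(\mathbf{p})^{-1}e_{d}$, whose first entry $(M(\mathbf{p})^{-1})_{1,d}=\pm1\ne0$ (a short back-substitution, using that $M(\mathbf{p})$ is bidiagonal); so the first row of $M(\mathbf{p})^{-1}N(\mathbf{p})$ is $(\pm1,0,\dots,0)$ and deleting the first row and column leaves the permutation matrix of the reversal of $\{2,\dots,d\}$. The same cascade now applies: the first coordinate forces $x_{1}=0$, and the remaining coordinates run through a permutation of $x_{2},\dots,x_{d}$, each forced to $0$.

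For part (2), take $\mathbf{p}=(1/m_{1},\dots,1/m_{d-1})$. Then $M(\mathbf{p})$ has exactly the diagonal of $D(\mathbf{m})$, and $M(\mathbf{p})=D(\mathbf{m})\,U$ where $U=I+D(\mathbf{m})^{-1}S$ and $S$ is the matrix with $1$'s on the superdiagonal; a direct computation shows that $U$ is the unipotent upper-triangular matrix whose superdiagonal is $(m_{1},\dots,m_{d-1})$, hence $U$ is integral unimodular. Therefore $D(\mathbf{m})\mathbb{Z}^{d}=M(\mathbf{p})U^{-1}\mathbb{Z}^{d}=M(\mathbf{p})\mathbb{Z}^{d}$, so $PD(\mathbf{m})Q\mathbb{Z}^{d}=PM(\mathbf{p})(U^{-1}Q)\mathbb{Z}^{d}$ with $P$ and $U^{-1}Q$ integral unimodular, and part (1) applies.

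For part (3), absorbing $V$ into the lattice it suffices to treat $\bigl(U\,\mathrm{diag}(m/n,n/m)\,\mathbb{Z}^{2},\mathbb{Z}^{2}\bigr)$. Since $\gcd(m,n)=1$ we have $\gcd(m^{2},n^{2})=1$, so the Smith normal form of the integral matrix $\mathrm{diag}(m^{2},n^{2})$ is $\mathrm{diag}(1,m^{2}n^{2})$; writing $\mathrm{diag}(m^{2},n^{2})=S^{-1}\,\mathrm{diag}(1,m^{2}n^{2})\,T^{-1}$ with $S,T$ integral unimodular and dividing by $mn$ gives
\[
\mathrm{diag}\!\left(\tfrac{m}{n},\tfrac{n}{m}\right)=S^{-1}\,\mathrm{diag}\!\left(\tfrac{1}{mn},mn\right)T^{-1}=S^{-1}\,D\bigl((mn)\bigr)\,T^{-1}.
\]
Hence $U\,\mathrm{diag}(m/n,n/m)\,V\,\mathbb{Z}^{2}=(US^{-1})\,D\bigl((mn)\bigr)\,(T^{-1}V)\,\mathbb{Z}^{2}$ with $US^{-1}$ and $T^{-1}V$ integral unimodular, and part (2) finishes the proof. (Every matrix appearing above has determinant of absolute value $1$, so all the pairs do have equal volume, as the definition of a good pair requires.) The main obstacle is part (1) — specifically, extracting from the unwieldy definition of $N(\mathbf{p})$ the two clean packing statements; once the column identities $N(\mathbf{p})e_{j}=M(\mathbf{p})e_{d+2-j}$ are in hand, everything else is elementary.
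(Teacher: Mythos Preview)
Your proof is correct. Parts~(1) and~(2) follow essentially the same route as the paper: for~(1) the paper computes $M(\mathbf{p})^{-1}N(\mathbf{p})$ as an explicit matrix and reads off the same cascade you describe, while your column identities $N(\mathbf{p})e_{j}=M(\mathbf{p})e_{d+2-j}$ (for $j\geq 2$) give a cleaner way to see the structure of that product; for~(2) both arguments take $\mathbf{p}=(1/m_{1},\dots,1/m_{d-1})$ and observe that $M(\mathbf{p})$ and $D(\mathbf{m})$ differ by an integral unipotent factor.

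Part~(3) is where you genuinely diverge. The paper argues directly: it exhibits an explicit parallelogram $S[0,1)^{2}$ with $S=\left[\begin{smallmatrix}0&1\\1&n/m\end{smallmatrix}\right]$, checks the two packing conditions by hand against $\mathbb{Z}^{2}$ and against $\left[\begin{smallmatrix}m/n&1\\0&n/m\end{smallmatrix}\right]\mathbb{Z}^{2}$, and then uses B\'ezout to conjugate away the off-diagonal~$1$. Your argument instead reduces~(3) to~(2) via the Smith normal form of $\mathrm{diag}(m^{2},n^{2})$: since $\gcd(m^{2},n^{2})=1$ the invariant factors are $1$ and $m^{2}n^{2}$, and dividing the SNF identity by the scalar $mn$ yields $\mathrm{diag}(m/n,n/m)=S^{-1}D((mn))T^{-1}$ with $S,T$ integral unimodular. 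This is shorter and more conceptual, and it avoids the ad~hoc $2\times 2$ computations; the paper's approach, on the other hand, has the advantage of producing an explicit common fundamental domain (namely $\left[\begin{smallmatrix}-m\ell_{2}&1-n\ell_{2}\\1&n/m\end{smallmatrix}\right][0,1)^{2}$), which your Smith-normal-form reduction does only implicitly through the chain of reductions back to the $N(\mathbf{p})$ of Part~(1).
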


It is worth mentioning that Part $3$ of Proposition \ref{order3} has also been
proved in \cite{Pfander}, Proposition $5.3.$ However, the novelty here lies in
our proof.

Next, for any real number $r,$ we define the matrix-valued function
\[
r\mapsto R\left(  r\right)  =%
\begin{bmatrix}
\sqrt{r} & 0\\
0 & \frac{1}{\sqrt{r}}%
\end{bmatrix}
.
\]

\begin{proposition}
\label{notgood} For any unimodular integral matrices $P,Q,$ if $r$ is a
natural number such that $\sqrt{r}$ is irrational then $\left(  \left(
PR\left(  r\right)  Q\right)  \mathbb{Z}^{2},\mathbb{Z}^{2}\right)  $ is not a
good pair of lattices.
\end{proposition}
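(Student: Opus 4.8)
The plan is to apply Proposition \ref{Main}, reduce to the case $P=Q=I$, prove the two–dimensional instance of Haj\'os's theorem on lattice tilings by unit squares, and conclude with a short Diophantine computation exploiting $\sqrt{r}\notin\mathbb{Q}$. Suppose, for contradiction, that $(PR(r)Q\mathbb{Z}^{2},\mathbb{Z}^{2})$ is a good pair. With $M_{1}=PR(r)Q$ and $M_{2}=I_{2}$, Proposition \ref{Main} provides a unimodular matrix $N$ with $N(-1,1)^{2}\cap\mathbb{Z}^{2}=\{\mathbf{0}\}$ and $N(-1,1)^{2}\cap(M_{1}^{-1}M_{2})\mathbb{Z}^{2}=\{\mathbf{0}\}$. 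Because $P$ and $Q$ are unimodular integral, $(M_{1}^{-1}M_{2})\mathbb{Z}^{2}=Q^{-1}R(r)^{-1}P^{-1}\mathbb{Z}^{2}=Q^{-1}R(r)^{-1}\mathbb{Z}^{2}$. Applying the linear isomorphism $Q$ to both relations (and using $Q\mathbb{Z}^{2}=\mathbb{Z}^{2}$), then applying $K^{-1}$ where $K:=QN$, one obtains a unimodular matrix $K$ such that the two covolume-$1$ lattices $L_{1}:=K^{-1}\mathbb{Z}^{2}$ and $L_{2}:=K^{-1}R(r)^{-1}\mathbb{Z}^{2}$ each meet the open square $(-1,1)^{2}$ only at $\mathbf{0}$.

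The crux is the following lemma, which I would prove from scratch: \emph{if $L\subseteq\mathbb{R}^{2}$ is a lattice of covolume $1$ with $L\cap(-1,1)^{2}=\{\mathbf{0}\}$, then $L$ contains $(1,0)$ or $(0,1)$} (equivalently: a lattice tiling of $\mathbb{R}^{2}$ by unit squares contains a coordinate unit vector, the planar case of Haj\'os's theorem). Choose a nonzero $v\in L$ of least sup-norm. The hypothesis gives $\|v\|_{\infty}\geq 1$, and applying Minkowski's convex body theorem to the open square $(-\|v\|_{\infty},\|v\|_{\infty})^{2}$ (area $4\|v\|_{\infty}^{2}$, covolume $1$) would produce a strictly shorter nonzero lattice vector unless $\|v\|_{\infty}\leq 1$; hence $\|v\|_{\infty}=1$, and $v$ is primitive in $L$ since a vector $v/k$ with $k\geq 2$ would lie in $(-1,1)^{2}$. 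After a symmetry of the square we may take $v=(1,b)$ with $0\leq b\leq 1$. Complete $\{v\}$ to a basis $\{v,w\}$ of $L$ and, subtracting an integer multiple of $v$ from $w=(p,q)$, assume $|p|\leq\tfrac12$; then $q=bp\pm 1$, and since $w\notin(-1,1)^{2}$ while $|p|<1$ we must have $|q|\geq 1$, forcing $p\geq 0$ (for the $+$ sign; $p\leq 0$ for the $-$ sign) unless $b=0$. If $b=0$ then $v=(1,0)\in L$; otherwise a direct computation shows that $w\mp v=(p\mp 1,\,b(p\mp 1)\pm 1)$ is a nonzero lattice vector with both coordinates in $(-1,1)$ unless $p=0$, so $p=0$ is forced and then $w=(0,\pm 1)\in L$. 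Undoing the symmetry yields the lemma.

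Apply the lemma to $L_{1}$ and $L_{2}$, and let $g_{1},g_{2}$ be the columns of $K$. Since $(1,0)\in K^{-1}\mathbb{Z}^{2}$ is equivalent to $g_{1}\in\mathbb{Z}^{2}$ (and similarly for $(0,1)$ and $g_{2}$), the lemma applied to $L_{1}$ gives $g_{1}\in\mathbb{Z}^{2}$ or $g_{2}\in\mathbb{Z}^{2}$; since $(1,0)\in K^{-1}R(r)^{-1}\mathbb{Z}^{2}$ is equivalent to $R(r)g_{1}\in\mathbb{Z}^{2}$ (and similarly for $g_{2}$), applying it to $L_{2}$ gives $R(r)g_{1}\in\mathbb{Z}^{2}$ or $R(r)g_{2}\in\mathbb{Z}^{2}$. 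Thus some column $g_{i}$ lies in $\mathbb{Z}^{2}$ and some column $g_{j}$ satisfies $R(r)g_{j}\in\mathbb{Z}^{2}$. If $i=j$, writing $g_{i}=(x,y)$ we have $x,\ \sqrt{r}\,x,\ y,\ y/\sqrt{r}\in\mathbb{Z}$, which forces $x=y=0$ (else $\sqrt{r}$ would be rational) and contradicts $|\det K|=1$. If $i\neq j$, say $g_{1}=(x_{1},y_{1})\in\mathbb{Z}^{2}$ and $R(r)g_{2}\in\mathbb{Z}^{2}$, so $g_{2}=(m/\sqrt{r},\,n\sqrt{r})$ with $m,n\in\mathbb{Z}$; then $\pm 1=\det K=x_{1}y_{2}-y_{1}x_{2}=(x_{1}nr-y_{1}m)/\sqrt{r}$, whence $x_{1}nr-y_{1}m=\pm\sqrt{r}$, an integer equal to an irrational number (here $r\in\mathbb{N}$ is needed, so that $r=(\sqrt{r})^{2}\in\mathbb{Z}$); the other mixed case is symmetric. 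Every case is impossible, so no such $K$ exists and $(PR(r)Q\mathbb{Z}^{2},\mathbb{Z}^{2})$ is not a good pair.

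I expect the lemma to be the only real obstacle. The reduction in the first paragraph and the case analysis in the last are routine bookkeeping, but one genuinely needs the planar Haj\'os statement — that a lattice tiling of $\mathbb{R}^{2}$ by unit squares contains a coordinate unit vector — which is elementary but not immediate. Alternatively one may quote Haj\'os's solution of Minkowski's conjecture in dimension two, though the self-contained argument sketched above is short enough to include.
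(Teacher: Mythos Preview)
Your proof is correct and follows the same arc as the paper's: reduce to a single unimodular matrix whose associated parallelogram is a common fundamental domain, establish that one column of that matrix must lie in $\mathbb{Z}^{2}$ and one column in the $R(r)$-lattice, and then derive a contradiction from $\sqrt{r}\notin\mathbb{Q}$ by a short determinant computation. Your final $i=j$ / $i\neq j$ split is exactly the paper's case analysis (its Cases~1.1--2.2), written more compactly.

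The only substantive difference is how the key ``column lies in the lattice'' step is justified. The paper argues geometrically: since $\Omega=N[0,1)^{2}$ tiles $\mathbb{R}^{2}$ by each lattice, some nonzero translate $\Omega+k$ must share a corner with $\Omega$ at the origin, forcing $Ne_{1}$ or $Ne_{2}$ into the lattice; this is asserted with reference to a figure rather than proved. You instead isolate this as the planar Haj\'os statement (a covolume-$1$ lattice disjoint from $(-1,1)^{2}\setminus\{\mathbf 0\}$ contains a coordinate unit vector) and give a self-contained proof via Minkowski's theorem and a short basis-reduction argument. Your route is more rigorous and makes explicit what the paper leaves to the picture; the paper's route is quicker to read but relies on the reader accepting the corner-meeting claim. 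Structurally the two proofs are the same.
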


We remark that Proposition \ref{notgood} is consistent with Proposition $5.3,$
\cite{Pfander} where it is proved that it is not possible to find a
star-shaped common fundamental domain for the lattices $R(2) \mathbb{Z}%
^{2},\mathbb{Z}^{2}$.

The present work is organized around the proofs of the results mentioned
above. In the second section we fix notations and present several results
crucial to the third section of the paper, in which we prove our main propositions.

\section{Generalities and Intermediate Results}

We remark that the investigation of good pairs of lattices in $\mathbb{R}^{d}$
where $d=1$ is not interesting. In fact, let us suppose that $\Gamma
_{1},\Gamma_{2}$ are two full-rank lattices of the same volume in
$\mathbb{R}.$ Then there exist nonzero real numbers $a,b$ such that
$\Gamma_{1}=a\mathbb{Z}\text{ and }\Gamma_{2}=b\mathbb{Z}$ and $\left\vert
a\right\vert =\left\vert b\right\vert .$ Thus, the half-open interval
$\left\vert a\right\vert \left[  0,1\right)  $ is a common fundamental domain
for the pair $\left(  \Gamma_{1},\Gamma_{2}\right)  ,$ and clearly $\left(
\Gamma_{1},\Gamma_{2}\right)  $ is a good pair of lattices. As such, in the
one-dimensional case every pair of lattices of the same volume is a good pair.
However, as we shall see in Proposition \ref{notgood}, there exist lattices of
the same volume in dimension two which are not good pairs.

\subsection{Notation and Terminology}

Throughout this paper, we shall assume that $d$ is a natural number strictly
greater than one. Let $M$ be a matrix. The transpose of the matrix $M$ is
denoted $M^{tr}.$ Let $v$ be a vector (in column form) in $\mathbb{R}^{d}.$
The Euclidean norm of $v$ is given by $\left\Vert v\right\Vert _{2}=\left(
\sum_{k=1}^{d}v_{k}^{2}\right)  ^{1/2},$ where%
\[
v=\left[
\begin{array}
[c]{ccc}%
v_{1} & \cdots & v_{d}%
\end{array}
\right]  ^{tr}.
\]
Given two vectors $v,w\in\mathbb{R}^{d},$ the inner product of $v$ and $w$ is
$\left\langle v,w\right\rangle =\sum_{k=1}^{d}v_{k}w_{k}.$

All subsets of $\mathbb{R}^{d}$ that we are concerned with in this paper will
be assumed to be Lebesgue measurable. Let $E$ be a subset of $\mathbb{R}^{d}.$
Then $\chi_{E}$ stands for the \textbf{indicator function} of the set $E.$
That is, $\chi_{E}:\mathbb{R}^{d}\rightarrow\mathbb{R}$ is the function
defined by
\[
\chi_{E}\left(  x\right)  =\left\{
\begin{array}
[c]{c}%
1\text{ if }x\in E\\
0\text{ if }x\notin E
\end{array}
\right.  .
\]
For any subset $E\subseteq$ $\mathbb{R}^{d}$ we define the set $E-E$ as
follows:
\[
E-E=\left\{  x-y\in\mathbb{R}^{d}:x,y\in E\right\}  .
\]
Throughout this paper, $\mathbf{0}$ stands for the zero vector in
$\mathbb{R}^{d},$ and we recall that $M$ is a \textbf{unimodular matrix} if
and only if $\det M=\pm1.$

\subsection{General Facts about Lattices and Good Pairs of Lattices}

\begin{lemma}
\label{First}Let $P,M$ be two matrices of the same order such that $\left\vert
\det P\right\vert =\left\vert \det M\right\vert .$ Then $\left(
P\mathbb{Z}^{d},M\mathbb{Z}^{d}\right)  $ is a good pair of lattices if and
only if for any invertible matrix $N$ of order $d,$ $\left(  \left(
NP\right)  \mathbb{Z}^{d},\left(  NM\right)  \mathbb{Z}^{d}\right)  $ is a
good pair of lattices.
\end{lemma}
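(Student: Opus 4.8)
The plan is to prove this by exploiting the defining property of a good pair directly, together with the linearity of the maps involved. Recall that $(\Gamma_1, \Gamma_2)$ is a good pair of lattices precisely when there is an invertible matrix $A$ with $|\det A| = 1$ (after normalizing — more precisely, with $A[0,1)^d$ having the correct volume) such that $A[0,1)^d$ tiles $\mathbb{R}^d$ under translation by both $P\mathbb{Z}^d$ and $M\mathbb{Z}^d$. The key observation is that the property "$E$ is a common fundamental domain for $\Gamma_1$ and $\Gamma_2$" transforms covariantly under an invertible linear map: if $E$ is a common fundamental domain for $(\Gamma_1, \Gamma_2)$, then $NE$ is a common fundamental domain for $(N\Gamma_1, N\Gamma_2)$, since $N$ is a bijection on $\mathbb{R}^d$ carrying $E + \gamma$ to $NE + N\gamma$, hence preserving both the packing condition ($\bigl(E+\lambda\bigr) \cap \bigl(E+\gamma\bigr) = \emptyset$) and the covering condition ($\bigcup_{\gamma} (E+\gamma) = \mathbb{R}^d$).

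First I would establish the forward direction: suppose $(P\mathbb{Z}^d, M\mathbb{Z}^d)$ is a good pair, so there is an invertible $A$ with $A[0,1)^d$ a common fundamental domain and $|\det A|$ equal to the common volume $|\det P| = |\det M|$. Fix any invertible $N$. Then $(NA)[0,1)^d = N\bigl(A[0,1)^d\bigr)$ is, by the covariance observation above, a common fundamental domain for $(NP)\mathbb{Z}^d$ and $(NM)\mathbb{Z}^d$. Since $NA$ is invertible and the parallelepiped $(NA)[0,1)^d$ has the right shape, $\bigl((NP)\mathbb{Z}^d, (NM)\mathbb{Z}^d\bigr)$ is a good pair. (One should note $|\det(NP)| = |\det(NM)|$, which is immediate.) For the converse, apply the statement just proved with $N$ replaced by $N^{-1}$ and with the lattices $(NP)\mathbb{Z}^d$, $(NM)\mathbb{Z}^d$ in place of $P\mathbb{Z}^d$, $M\mathbb{Z}^d$: if $\bigl((NP)\mathbb{Z}^d,(NM)\mathbb{Z}^d\bigr)$ is a good pair then so is $\bigl((N^{-1}NP)\mathbb{Z}^d,(N^{-1}NM)\mathbb{Z}^d\bigr) = \bigl(P\mathbb{Z}^d, M\mathbb{Z}^d\bigr)$. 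This closes the equivalence.

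I do not expect a serious obstacle here; the lemma is essentially a bookkeeping statement about how fundamental domains behave under linear change of coordinates. The one point requiring a little care is the normalization: the definition of "good pair" in the paper requires the two lattices to have the same volume and requires the matrix $N$ appearing in $N[0,1)^d$ to be invertible, with the volume of $N[0,1)^d$ forced to equal that common volume. So throughout I must check that each matrix I produce ($NA$, etc.) is invertible and that the volume condition $|\det P| = |\det M|$ is inherited by $(NP, NM)$ — both of which are trivial since $\det(NP) = (\det N)(\det P)$. The substantive content is entirely captured by the single sentence: an invertible linear bijection carries a common fundamental domain of a pair of lattices to a common fundamental domain of the image pair, and its inverse does the same, so "being a good pair" is invariant under simultaneous invertible linear transformation of both lattices.
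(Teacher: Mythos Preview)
Your proposal is correct and follows essentially the same approach as the paper: both arguments rest on the covariance observation that if $E=A[0,1)^d$ is a common fundamental domain for $(P\mathbb{Z}^d,M\mathbb{Z}^d)$ then $NE=(NA)[0,1)^d$ is one for $((NP)\mathbb{Z}^d,(NM)\mathbb{Z}^d)$, and then invoke $N^{-1}$ for the converse. The only cosmetic difference is that the paper phrases the tiling condition via indicator-function identities $\sum_{k}\chi_E(x+Pk)=1$ rather than the set-theoretic packing/covering language you use.
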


\begin{proof}
Assume that $\left(  P\mathbb{Z}^{d},M\mathbb{Z}^{d}\right)  $ is a good pair
of lattices. Then from \cite{Pfander}, Page $3$ we know that $\left(
P\mathbb{Z}^{d},M\mathbb{Z}^{d}\right)  $ is a good pair of lattices if and
only if there exists a set $E=Z[0,1)^{d}$ such that $\sum_{k\in\mathbb{Z}^{d}%
}\chi_{E}\left(  x+Pk\right)  =\sum_{k\in\mathbb{Z}^{d}}\chi_{E}\left(
x+Mk\right)  =1$ for all $x\in\mathbb{R}^{d},$ where $Z$ is a matrix of order
$d$ and $\left\vert \det Z\right\vert =\left\vert \det P\right\vert
=\left\vert \det M\right\vert $. We shall show that the functions
\[
\mathbb{R}\ni x\mapsto\sum_{k\in\mathbb{Z}^{d}}\chi_{NE}\left(  x+NPk\right)
\text{ and }\mathbb{R}\ni x\mapsto\sum_{k\in\mathbb{Z}^{d}}\chi_{NE}\left(
x+NMk\right)
\]
are each equal to the constant function $\mathbb{R}\ni x\mapsto1.$\newline
Indeed, given any $x\in\mathbb{R}^{d},$ since $\sum_{k\in\mathbb{Z}^{d}}%
\chi_{E}\left(  x+Pk\right)  $ is equal to one, it follows that
\[
\sum_{k\in\mathbb{Z}^{d}}\chi_{NE}\left(  x+NPk\right)  =\sum_{k\in
\mathbb{Z}^{d}}\chi_{NE}\left(  NN^{-1}x+NPk\right)  =\sum_{k\in\mathbb{Z}%
^{d}}\chi_{E}\left(  N^{-1}x+Pk\right)  =1.
\]
Similarly, using the fact that $\sum_{k\in\mathbb{Z}^{d}}\chi_{E}\left(
x+Mk\right)  =1$ for all $x\in\mathbb{R}^{d},$ we obtain:
\[
\sum_{k\in\mathbb{Z}^{d}}\chi_{NE}\left(  x+NMk\right)  =\sum_{k\in
\mathbb{Z}^{d}}\chi_{NE}\left(  N\left(  N^{-1}x+Mk\right)  \right)
=\sum_{k\in\mathbb{Z}^{d}}\chi_{E}\left(  N^{-1}x+Mk\right)  =1.
\]
Therefore, $NE$ is a fundamental domain for $NP\mathbb{Z}^{d}$ and for
$NM\mathbb{Z}^{d}$ as well.

Now, let us assume that $\left(  NP\mathbb{Z}^{d},NM\mathbb{Z}^{d}\right)  $
is a good pair of lattices. That is, there is a set $E=Z[0,1)^{d}$ for some
matrix $Z$ such that
\[
\sum_{k\in\mathbb{Z}^{d}}\chi_{E}\left(  x+NPk\right)  =\sum_{k\in
\mathbb{Z}^{d}}\chi_{E}\left(  x+NMk\right)  =1
\]
$\text{ for all }x\in\mathbb{R}^{d}.$ Next,
\begin{align*}
\sum_{k\in\mathbb{Z}^{d}}\chi_{E}\left(  x+NPk\right)   &  =\sum
_{k\in\mathbb{Z}^{d}}\chi_{N^{-1}E}\left(  N^{-1}x+Pk\right)  =1\\
\sum_{k\in\mathbb{Z}^{d}}\chi_{E}\left(  x+NMk\right)   &  =\sum
_{k\in\mathbb{Z}^{d}}\chi_{N^{-1}E}\left(  N^{-1}x+Mk\right)  =1
\end{align*}
and $N^{-1}E$ is a common fundamental domain for $M\mathbb{Z}^{d}$ and
$P\mathbb{Z}^{d}.$
\end{proof}

\begin{lemma}
\label{integral}The following holds true:

\begin{enumerate}
\item Let $\Gamma=M\mathbb{Z}^{d}$ where $M$ is an invertible matrix with
entries in $\mathbb{Z}$. If $\left\vert \det M\right\vert =1$ then
$\Gamma=M\mathbb{Z}^{d}=\mathbb{Z}^{d}.$

\item Let $\Gamma_{1}=M_{1}\mathbb{Z}^{d},$ $\Gamma_{2}=M_{2}\mathbb{Z}^{d}$
be two full-rank lattices of the same volume. Then $\Gamma_{1}=\Gamma_{2}$ if
and only if $M_{1}=M_{2}U$ for some integral unimodular matrix $U.$
\end{enumerate}
\end{lemma}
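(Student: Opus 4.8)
The plan is to handle the two parts separately, with Part (2) building on Part (1).

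For Part (1), the strategy is to show both inclusions. The inclusion $M\mathbb{Z}^d \subseteq \mathbb{Z}^d$ is immediate since $M$ has integer entries, so $Mk \in \mathbb{Z}^d$ for every $k \in \mathbb{Z}^d$. For the reverse inclusion, I would use the hypothesis $|\det M| = 1$ to conclude that $M^{-1}$ also has integer entries: by Cramer's rule (or the adjugate formula $M^{-1} = \frac{1}{\det M}\operatorname{adj}(M)$), each entry of $M^{-1}$ is $\pm 1$ times a cofactor of $M$, hence an integer. Then for any $n \in \mathbb{Z}^d$ we have $n = M(M^{-1}n)$ with $M^{-1}n \in \mathbb{Z}^d$, so $n \in M\mathbb{Z}^d$. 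This gives $\mathbb{Z}^d \subseteq M\mathbb{Z}^d$ and hence equality.

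For Part (2), I would argue both directions. For the ``if'' direction: suppose $M_1 = M_2 U$ with $U$ integral unimodular. Then $M_1 \mathbb{Z}^d = M_2 U \mathbb{Z}^d = M_2 (U\mathbb{Z}^d)$, and by Part (1) applied to $U$ we have $U\mathbb{Z}^d = \mathbb{Z}^d$, so $M_1\mathbb{Z}^d = M_2\mathbb{Z}^d$, i.e.\ $\Gamma_1 = \Gamma_2$. For the ``only if'' direction: assume $\Gamma_1 = \Gamma_2$, and set $U = M_2^{-1} M_1$ (this makes sense since $M_1, M_2$ are invertible). I claim $U$ is integral and unimodular. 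Since $M_1 \mathbb{Z}^d = M_2\mathbb{Z}^d$, each column $M_1 e_j$ of $M_1$ lies in $M_2\mathbb{Z}^d$, so $M_2^{-1}M_1 e_j \in \mathbb{Z}^d$; thus $U$ has integer entries. Running the same argument with the roles of $M_1$ and $M_2$ swapped shows $U^{-1} = M_1^{-1}M_2$ is also integral. Since $U$ and $U^{-1}$ are both integral, $\det U$ and $\det(U^{-1}) = 1/\det U$ are both integers, forcing $\det U = \pm 1$; hence $U$ is unimodular, and $M_1 = M_2 U$ as desired.

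I do not anticipate a serious obstacle here; both parts are standard facts about integral lattices. The only point requiring a little care is the appeal to the adjugate formula in Part (1) to get integrality of $M^{-1}$, and the symmetric use of the hypothesis $\Gamma_1 = \Gamma_2$ in Part (2) to get integrality of both $U$ and $U^{-1}$ before concluding $\det U = \pm 1$.
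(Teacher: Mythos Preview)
Your proposal is correct and follows essentially the same approach as the paper: both inclusions in Part~(1) via integrality of $M$ and $M^{-1}$, and in Part~(2) defining $U=M_2^{-1}M_1$ and checking it is integral unimodular. The only minor difference is that the paper concludes $|\det U|=1$ directly from the equal-volume hypothesis $|\det M_1|=|\det M_2|$, whereas you deduce it from the integrality of both $U$ and $U^{-1}$; either route works.
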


\begin{proof}
For the first part, if $M$ is an integral matrix, then clearly, $\Gamma$ is a
subgroup of $\mathbb{Z}^{d}.$ In order to prove that $\mathbb{Z}^{d}$ is a
subgroup of $\Gamma,$ it is enough to show that the canonical basis elements
of the lattice $\mathbb{Z}^{d}$ are also elements of $M\mathbb{Z}^{d}.$ Let
$\left\{  e_{1},\cdots,e_{d}\right\}  $ be the canonical basis for the lattice
$\mathbb{Z}^{d}.$ That is, the matrix $\left[
\begin{array}
[c]{ccc}%
e_{1} & \cdots & e_{d}%
\end{array}
\right]  $ is the identity matrix of order $d$. Now, let $b_{j}=M^{-1}e_{j}$
for $j\in\left\{  1,\cdots,d\right\}  .$ Since $\left\vert \det M\right\vert
=1$ and $M^{-1}$ is an integral matrix, it is clear that each $b_{j}$ is an
integral vector and $Mb_{j}=e_{j}.$ Thus the set containing vectors
$e_{1},\cdots,e_{d}$ is a subset of $M\mathbb{Z}^{d}$ and $\mathbb{Z}^{d}$ is
a subgroup of $\Gamma$.

For the second part, assume that $\Gamma_{1}=\Gamma_{2}$. For each
$k\in\left\{  1,2,\cdots,d\right\}  $ there exists $\ell_{k}\in\mathbb{Z}^{d}$
such that $M_{1}e_{k}=M_{2}\ell_{k}.$ Next, let $U=\left[
\begin{array}
[c]{ccc}%
\ell_{1} & \cdots & \ell_{d}%
\end{array}
\right]  $ be a matrix of order $d$. By assumption, $M_{1}=M_{2}U.$ Moreover,
since $U=M_{2}^{-1}M_{1}$ then $\left\vert \det U\right\vert =1.$ Next, let us
suppose that $M_{1}=M_{2}U$ for some integral matrix $U$ where $\left\vert
\det U\right\vert =1.$ For any $\ell\in\mathbb{Z}^{d},$ $M_{1}\ell
=M_{2}\left(  U\ell\right)  \in M_{2}\mathbb{Z}^{d}.$ It follows that for any
$\ell\in\mathbb{Z}^{d},$ $M_{2}\ell=M_{1}\left(  U^{-1}\ell\right)  \in
M_{1}\mathbb{Z}^{d}.$ Therefore, $\Gamma_{1}=\Gamma_{2}.$ This completes the proof.
\end{proof}

\begin{proposition}
\label{changebasis} Let $M,M_{1},M_{2}$ be invertible matrices of order $d.$ Then

\begin{enumerate}
\item $\left(  M\mathbb{Z}^{d},\mathbb{Z}^{d}\right)  $ is a good pair of
lattices if and only if for any unimodular integral matrices $P$ and $T,$
$\left(  PMT\mathbb{Z}^{d},\mathbb{Z}^{d}\right)  $ is a good pair of lattices.

\item $\left(  M_{1}\mathbb{Z}^{d},M_{2}\mathbb{Z}^{d}\right)  $ is a good
pair of lattices if and only if $\left(  \mathbb{Z}^{d},M_{1}^{-1}%
M_{2}\mathbb{Z}^{d}\right)  $ is a good pair of lattices.

\item $\left(  M\mathbb{Z}^{d},\mathbb{Z}^{d}\right)  $ is a good pair of
lattices if and only if $\left(  M^{-1}\mathbb{Z}^{d},\mathbb{Z}^{d}\right)  $
is a good pair of lattices.
\end{enumerate}
\end{proposition}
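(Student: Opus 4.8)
The plan is to prove Proposition \ref{changebasis} by reducing each of the three parts to the characterization in Proposition \ref{Main}, together with the invariance properties already established in Lemma \ref{First} and Lemma \ref{integral}. The governing principle is that the "goodness" of a pair $(M_1\mathbb{Z}^d, M_2\mathbb{Z}^d)$ depends only on the lattice $(M_1^{-1}M_2)\mathbb{Z}^d$ relative to $\mathbb{Z}^d$, and that this lattice is unchanged when we alter the $M_i$ by integral unimodular matrices in the appropriate slots.

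For Part (2), I would first observe that by Lemma \ref{First} (applied with $N = M_1^{-1}$, noting $|\det M_1^{-1}| = |\det M_1^{-1}|$ trivially and that the two lattices have equal volume), the pair $(M_1\mathbb{Z}^d, M_2\mathbb{Z}^d)$ is good if and only if $(M_1^{-1}M_1\mathbb{Z}^d, M_1^{-1}M_2\mathbb{Z}^d) = (\mathbb{Z}^d, (M_1^{-1}M_2)\mathbb{Z}^d)$ is good. That is exactly the claim. Alternatively, if one prefers to bypass Lemma \ref{First}, one checks directly from Proposition \ref{Main} that the defining condition for $(M_1\mathbb{Z}^d, M_2\mathbb{Z}^d)$ — existence of unimodular $N$ with $N(-1,1)^d$ meeting $(M_1^{-1}M_2)\mathbb{Z}^d$ and $\mathbb{Z}^d$ only at $\mathbf{0}$ — is visibly symmetric in the roles of $M_1$ and the identity once $M_1^{-1}M_2$ replaces $M_2$, since in Proposition \ref{Main} the matrix $M_1$ enters only through $M_1^{-1}M_2$.

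For Part (1), I would use Part (2) (or Lemma \ref{First}) together with Lemma \ref{integral}(1). Given unimodular integral $P, T$, the pair $(PMT\mathbb{Z}^d, \mathbb{Z}^d)$ is good iff, by Part (2), $((PMT)^{-1}\mathbb{Z}^d \text{ adjusted})$... more cleanly: since $T$ is integral unimodular, $T\mathbb{Z}^d = \mathbb{Z}^d$ by Lemma \ref{integral}(1), so $PMT\mathbb{Z}^d = PM\mathbb{Z}^d$; thus it suffices to handle the left factor $P$. For that, apply Lemma \ref{First} with $N = P^{-1}$: $(PM\mathbb{Z}^d, \mathbb{Z}^d)$ is good iff $(M\mathbb{Z}^d, P^{-1}\mathbb{Z}^d)$ is good, and $P^{-1}\mathbb{Z}^d = \mathbb{Z}^d$ again by Lemma \ref{integral}(1) since $P^{-1}$ is integral unimodular. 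This gives the forward implication with the reverse being immediate by specializing $P = T = I$. Part (3) follows from Part (2) by taking $M_1 = M$, $M_2 = I$: then $(M\mathbb{Z}^d,\mathbb{Z}^d)$ good $\iff$ $(\mathbb{Z}^d, M^{-1}\mathbb{Z}^d)$ good, and one more application of Part (2) (or Lemma \ref{First} with $N$ swapping, i.e. using symmetry of "good pair" in its two arguments, which is clear from the definition) turns $(\mathbb{Z}^d, M^{-1}\mathbb{Z}^d)$ into $(M^{-1}\mathbb{Z}^d, \mathbb{Z}^d)$.

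I do not anticipate a serious obstacle here; the proposition is essentially a bookkeeping consequence of Lemma \ref{First} and Lemma \ref{integral}. The one point requiring a little care is making sure the volume hypotheses in Lemma \ref{First} are met at each invocation (they are, since multiplying both lattices by the same $N$ preserves equality of volumes, and $PMT\mathbb{Z}^d$ has the same volume as $M\mathbb{Z}^d$ when $|\det P| = |\det T| = 1$), and noting that the definition of a good pair is manifestly symmetric in the two lattices, which is what licenses the swap used in Part (3). I would also remark that Part (1) can be read off directly from Proposition \ref{Main}: replacing $M$ by $PMT$ replaces $M^{-1}$ by $T^{-1}M^{-1}P^{-1}$, and since $P^{-1}, T^{-1}$ are integral unimodular, $(PMT)^{-1}\mathbb{Z}^d = T^{-1}M^{-1}\mathbb{Z}^d = M^{-1}(\text{something})$... — but the cleanest exposition routes through Lemma \ref{First} and Lemma \ref{integral}(1) as above, so that is the version I would write out.
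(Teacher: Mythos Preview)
Your substantive arguments for all three parts are correct and match the paper's proof essentially line for line: both you and the paper route everything through Lemma~\ref{First} (to pass an invertible matrix across both lattices simultaneously) and Lemma~\ref{integral}(1) (to absorb integral unimodular factors into $\mathbb{Z}^d$), and Part~3 is deduced from the same identity $(M^{-1}M\mathbb{Z}^d, M^{-1}\mathbb{Z}^d)=(\mathbb{Z}^d, M^{-1}\mathbb{Z}^d)$.

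One caution about framing: your opening sentence says you will reduce to Proposition~\ref{Main}, and you offer it again as an ``alternative'' for Part~2. In this paper that would be circular, since the proof of Proposition~\ref{Main} explicitly invokes Proposition~\ref{changebasis} Part~2. Fortunately none of your actual deductions depend on Proposition~\ref{Main}; just delete those references and keep the Lemma~\ref{First}/Lemma~\ref{integral} argument you wrote out.
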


\begin{proof}
For Part $1,$ assume that $\left(  M\mathbb{Z}^{d},\mathbb{Z}^{d}\right)  $ is
a good pair of lattices. Let $T,P$ be two unimodular integral matrices. Then
$\left(  M\mathbb{Z}^{d},\mathbb{Z}^{d}\right)  =\left(  M\left(
T\mathbb{Z}^{d}\right)  ,\mathbb{Z}^{d}\right)  .$ By applying Lemma
\ref{First} we see that $\left(  PMT\mathbb{Z}^{d},P\mathbb{Z}^{d}\right)  $
is a good pair of lattices. However, according to Lemma \ref{integral} Part
$1,$ we have $P\mathbb{Z}^{d}=\mathbb{Z}^{d}.$ Therefore, $\left(
PMT\mathbb{Z}^{d},\mathbb{Z}^{d}\right)  $ is a good pair of lattices. Now,
for the converse, let us assume that $\left(  PMT\mathbb{Z}^{d},\mathbb{Z}%
^{d}\right)  $ is a good pair of lattices. Since the inverse of $P$ is an
integral unimodular matrix, then
\[
\left(  MT\mathbb{Z}^{d},P^{-1}\mathbb{Z}^{d}\right)  =\left(  MT\mathbb{Z}%
^{d},\mathbb{Z}^{d}\right)  =\left(  M\mathbb{Z}^{d},\mathbb{Z}^{d}\right)
\]
is a good pair of lattices. This completes the proof of Part $1.$

Part $2$ follows from Lemma \ref{First}; indeed, $\left(  M_{1}\mathbb{Z}%
^{d},M_{2}\mathbb{Z}^{d}\right)  $ is a good pair of lattices if and only if
\[
\left(  M_{1}^{-1}\left(  M_{1}\mathbb{Z}^{d}\right)  ,M_{1}^{-1}\left(
M_{2}\mathbb{Z}^{d}\right)  \right)  =\left(  \mathbb{Z}^{d},\left(
M_{1}^{-1}M_{2}\right)  \mathbb{Z}^{d}\right)
\]
is a good pair of lattices. Similarly, Part $3$ follows from Lemma \ref{First}
as well and is simply due to the fact that
\[
\left(  M^{-1}M\mathbb{Z}^{d},M^{-1}\mathbb{Z}^{d}\right)  =\left(
\mathbb{Z}^{d},M^{-1}\mathbb{Z}^{d}\right)  .
\]

\end{proof}

The following lemmas play a central role in the proof of our main results.

\begin{lemma}
\label{common}Let $\Gamma_{1}=M\mathbb{Z}^{d}$ such that $\left\vert \det
M\right\vert =1.$ $E$ is a common fundamental domain for $\Gamma_{1}$ and
$\Gamma_{2}=\mathbb{Z}^{d}$ if and only if $\left(  E-E\right)  \cap
M\mathbb{Z}^{d}=\left\{  \mathbf{0}\right\}  ,\text{ }\left(  E-E\right)
\cap\mathbb{Z}^{d}=\left\{  \mathbf{0}\right\}  $ and the volume of $E$ is
equal to one.
\end{lemma}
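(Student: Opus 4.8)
The plan is to prove the standard packing-tiling characterization of a common fundamental domain, specialized to the case where one lattice is $\mathbb{Z}^d$ and the other is a unimodular lattice $M\mathbb{Z}^d$. I would first recall the basic fact that for a full-rank lattice $\Gamma$, a measurable set $E$ packs $\mathbb{R}^d$ by $\Gamma$ if and only if $(E-E)\cap\Gamma=\{\mathbf{0}\}$: the translates $E+\lambda$ and $E+\gamma$ with $\lambda\neq\gamma$ overlap on a set of positive measure precisely when the difference $\gamma-\lambda$ (a nonzero lattice vector) lies in $E-E$. This gives the two ``packing'' conditions in the statement. The content of the lemma beyond this is: once $E$ packs both lattices and has volume one, $E$ is actually a \emph{fundamental domain} (i.e. the translates also \emph{cover} $\mathbb{R}^d$) for each lattice. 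For this I would invoke the remark made in the paper's introduction — if $E$ packs $\mathbb{R}^d$ by $\Gamma$ and the Lebesgue measure of $E$ equals the volume of $\Gamma$, then $E$ tiles, hence is a fundamental domain. Since $\operatorname{vol}(M\mathbb{Z}^d)=|\det M|=1=\operatorname{vol}(\mathbb{Z}^d)=\operatorname{vol}(E)$, both covering statements follow automatically.

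Concretely I would organize it as follows. ($\Rightarrow$) Suppose $E$ is a common fundamental domain for $\Gamma_1$ and $\Gamma_2$. Then in particular $E$ is a fundamental domain of $\mathbb{Z}^d$, so it packs $\mathbb{R}^d$ by $\mathbb{Z}^d$ and has measure equal to $\operatorname{vol}(\mathbb{Z}^d)=1$; packing by $\mathbb{Z}^d$ gives $(E-E)\cap\mathbb{Z}^d=\{\mathbf{0}\}$. Likewise $E$ is a fundamental domain of $M\mathbb{Z}^d$, so it packs $\mathbb{R}^d$ by $M\mathbb{Z}^d$, giving $(E-E)\cap M\mathbb{Z}^d=\{\mathbf{0}\}$. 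That the measure of $E$ is one has already been recorded. ($\Leftarrow$) Conversely, assume $(E-E)\cap M\mathbb{Z}^d=\{\mathbf{0}\}$, $(E-E)\cap\mathbb{Z}^d=\{\mathbf{0}\}$, and $\operatorname{vol}(E)=1$. The first and third conditions say $E$ packs $\mathbb{R}^d$ by $M\mathbb{Z}^d$ and has measure equal to $\operatorname{vol}(M\mathbb{Z}^d)=1$, hence $E$ is a fundamental domain of $M\mathbb{Z}^d$; the second and third conditions similarly make $E$ a fundamental domain of $\mathbb{Z}^d$. Thus $E$ is a common fundamental domain.

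For the one nontrivial implication — that packing together with the correct measure forces tiling — I would give the short measure-theoretic argument explicitly rather than merely cite it, to keep the lemma self-contained: writing $F(x)=\sum_{\gamma\in\Gamma}\chi_E(x+\gamma)$, packing means $F\le 1$ a.e., while integrating $F$ over a fundamental domain of $\Gamma$ (any measurable tile, e.g. a parallelepiped spanned by a basis of $\Gamma$) gives $\int F = \operatorname{vol}(E)/\operatorname{vol}(\Gamma)$ by $\Gamma$-periodicity of $F$ and a change of variables; when this equals $1$ and $F\le 1$ a.e., one concludes $F=1$ a.e., which is exactly the tiling (covering) condition up to a null set. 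I would remark that the identity $\sum_{\gamma}\chi_E(x+\gamma)=1$ a.e. is precisely the notion of fundamental domain used in the cited Lemma~\ref{First}'s proof, so the two formulations agree.

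The only genuine subtlety — and the step I expect to need the most care — is bookkeeping about null sets: ``fundamental domain'' as defined in the paper's introduction demands the translates be pairwise disjoint (not just disjoint up to measure zero) and cover all of $\mathbb{R}^d$ (not just almost all), whereas the $(E-E)\cap\Gamma=\{\mathbf{0}\}$ condition and the measure argument naturally produce statements valid only almost everywhere. I would handle this by noting that the relevant notion throughout the paper (as the proof of Lemma~\ref{First} makes clear via the indicator-sum formulation) is really the a.e.\ one, or alternatively by observing that $(E-E)\cap\Gamma=\{\mathbf 0\}$ is literally equivalent to genuine (everywhere) pairwise-disjointness of the translates, so only the covering clause is up to measure zero; either way the equivalence as stated holds with the conventions in force. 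Everything else is a routine unwinding of definitions.
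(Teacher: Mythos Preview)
Your proposal is correct and follows essentially the same route as the paper's own proof: both directions hinge on the equivalence between packing by $\Gamma$ and the condition $(E-E)\cap\Gamma=\{\mathbf 0\}$, together with the fact (stated in the paper's introduction) that a packing set of the correct volume is automatically a fundamental domain. The paper's proof is terser---it simply invokes that introductory remark and points to the calculation in Lemma~\ref{times} for the packing equivalence---whereas you spell out the measure-theoretic argument $\int F=\operatorname{vol}(E)/\operatorname{vol}(\Gamma)$ and flag the null-set bookkeeping explicitly; these additions are welcome but do not change the underlying strategy.
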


\begin{proof}
Assume that $E$ is a common fundamental domain for $\Gamma_{1}$ and
$\Gamma_{2}=\mathbb{Z}^{d}.$ Then clearly, the volume of the set $E$ must be
equal to one. Next, given distinct $k,l\in\mathbb{Z}^{d},$ it is clear that
$\left(  E+Mk\right)  \cap\left(  E+Ml\right)  $ is an empty set. Therefore,
given any $x,y\in E,$ it must be true that $x-y$ can never be equal to $Mn$
for some $n\in\mathbb{Z}^{d}$ unless $n=\mathbf{0}.$ Therefore, $\left(
E-E\right)  \cap M\mathbb{Z}^{d}=\left\{  \mathbf{0}\right\}  .$ If $M$ is the
identity matrix, a similar argument allows us to derive that $\left(
E-E\right)  \cap\mathbb{Z}^{d}=\left\{  \mathbf{0}\right\}  $ as well.

Next, assuming that $\left(  E-E\right)  \cap M\mathbb{Z}^{d}=\left\{
\mathbf{0}\right\}  $ and $\left(  E-E\right)  \cap\mathbb{Z}^{d}=\left\{
\mathbf{0}\right\}  ,$ a calculation similar to that found in the proof of
Lemma~\ref{times} shows that $\left(  E+Mk\right)  \cap\left(  E+Ml\right)  $
is an empty set for $l$ not equal to $k$. Finally, since it is assumed that
the volume of $E$ is equal to one then $E$ is a common fundamental domain for
$\Gamma_{1}$ and $\Gamma_{2}=\mathbb{Z}^{d}.$ This completes the proof.
\end{proof}

\begin{lemma}
\label{good pair}Assume that $\left\vert \det M\right\vert =1.$ Then $\left(
M\mathbb{Z}^{d},\mathbb{Z}^{d}\right)  $ is a good pair if and only if there
exists a unimodular matrix $N$ such that $N\left(  -1,1\right)  ^{d}\cap
M\mathbb{Z}^{d}=\left\{  \mathbf{0}\right\}  ,\text{ and }N\left(
-1,1\right)  ^{d}\cap\mathbb{Z}^{d}=\left\{  \mathbf{0}\right\}  .$
\end{lemma}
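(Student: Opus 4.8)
The plan is to reduce Lemma~\ref{good pair} to Lemma~\ref{common} by showing that the parallelepiped $N[0,1)^d$ serves as a common fundamental domain precisely when $N(-1,1)^d$ meets each of the two lattices only at $\mathbf{0}$. The natural bridge is the elementary identity $E - E = N[0,1)^d - N[0,1)^d = N\bigl([0,1)^d - [0,1)^d\bigr) = N(-1,1)^d$, which holds because the difference set of the half-open unit cube $[0,1)^d$ is exactly the open cube $(-1,1)^d$, and because $N$ is linear. Given this, the equivalence follows almost immediately: with $E = N[0,1)^d$ we have $\mathrm{vol}(E) = |\det N| = 1$ automatically from unimodularity, so the volume hypothesis in Lemma~\ref{common} is free, and the two intersection conditions $(E-E)\cap M\mathbb{Z}^d = \{\mathbf{0}\}$ and $(E-E)\cap\mathbb{Z}^d = \{\mathbf{0}\}$ become exactly $N(-1,1)^d \cap M\mathbb{Z}^d = \{\mathbf{0}\}$ and $N(-1,1)^d \cap \mathbb{Z}^d = \{\mathbf{0}\}$.

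First I would establish the difference-set identity. For the unit cube, if $x, y \in [0,1)^d$ then each coordinate of $x - y$ lies in $(-1,1)$, so $[0,1)^d - [0,1)^d \subseteq (-1,1)^d$; conversely, given $z \in (-1,1)^d$, set $x_i = \max(z_i, 0)$ and $y_i = \max(-z_i, 0)$ coordinatewise, so that $x, y \in [0,1)^d$ and $x - y = z$, giving the reverse inclusion. Linearity of $N$ then yields $N[0,1)^d - N[0,1)^d = N(-1,1)^d$. Then, for the forward direction of the lemma, suppose $(M\mathbb{Z}^d, \mathbb{Z}^d)$ is a good pair, so by definition there is an invertible $N$ with $N[0,1)^d$ a common fundamental domain for both lattices; such a set has volume one, forcing $|\det N| = 1$, i.e.\ $N$ is unimodular. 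Apply Lemma~\ref{common} with $E = N[0,1)^d$ (noting $|\det M| = 1$ is assumed) to get $(E - E) \cap M\mathbb{Z}^d = \{\mathbf{0}\}$ and $(E-E)\cap\mathbb{Z}^d = \{\mathbf{0}\}$, then rewrite $E - E = N(-1,1)^d$ to conclude.

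For the reverse direction, suppose a unimodular $N$ exists with $N(-1,1)^d \cap M\mathbb{Z}^d = \{\mathbf{0}\}$ and $N(-1,1)^d \cap \mathbb{Z}^d = \{\mathbf{0}\}$. Set $E = N[0,1)^d$. Then $E - E = N(-1,1)^d$, so both intersection conditions of Lemma~\ref{common} hold, and $\mathrm{vol}(E) = |\det N| = 1$. Lemma~\ref{common} then gives that $E = N[0,1)^d$ is a common fundamental domain for $M\mathbb{Z}^d$ and $\mathbb{Z}^d$, which is exactly the statement that $(M\mathbb{Z}^d, \mathbb{Z}^d)$ is a good pair with the witnessing matrix $N$.

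I do not expect a serious obstacle here: the content is entirely packaged in Lemma~\ref{common}, and the only genuinely new ingredient is the bookkeeping identity $N[0,1)^d - N[0,1)^d = N(-1,1)^d$ together with the observation that "common fundamental domain of the form $N[0,1)^d$" automatically forces $|\det N| = 1$ (so "invertible $N$" in the definition of good pair may be upgraded to "unimodular $N$" at no cost). The mildest care needed is to confirm that the definition of good pair allows us to take the witness $N$ to be the same matrix throughout — which it does, since we are building the fundamental domain as $N[0,1)^d$ for a single $N$. Everything else is a direct substitution into the already-proved Lemma~\ref{common}.
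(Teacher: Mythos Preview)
Your proposal is correct and follows essentially the same route as the paper: invoke Lemma~\ref{common} with $E = N[0,1)^d$, and use the identity $N[0,1)^d - N[0,1)^d = N(-1,1)^d$ to translate the $(E-E)$ conditions into the stated intersection conditions. Your write-up is in fact more careful than the paper's own proof, since you spell out the difference-set identity for the cube and make explicit that the volume hypothesis forces $|\det N|=1$, whereas the paper simply asserts these points.
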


\begin{proof}
$\left(  M\mathbb{Z}^{d},\mathbb{Z}^{d}\right)  $ is a good pair if and only
if there exists a common fundamental domain $E=N\left[  0,1\right)  ^{d}$ for
the lattices $M\mathbb{Z}^{d},\mathbb{Z}^{d}$ where $N$ is a unimodular
matrix. Now, appealing to Lemma \ref{common}, this holds if and only if
$\left(  E-E\right)  \cap M\mathbb{Z}^{d}=\left\{  \mathbf{0}\right\}  $ and
$\left(  E-E\right)  \cap\mathbb{Z}^{d}=\left\{  \mathbf{0}\right\}  .$
Finally, the proof is completed by observing that $\left(  E-E\right)
=N\left[  0,1\right)  ^{d}-N\left[  0,1\right)  ^{d}=N\left(  -1,1\right)
^{d}.$
\end{proof}

Appealing to Lemma \ref{common}, the following is immediate:

\begin{lemma}
\label{condition}Let $\Gamma_{1}=M\mathbb{Z}^{d}$ such that $\left\vert \det
M\right\vert =1.$ Then $M\left[  0,1\right)  ^{d}$ is a common fundamental
domain for $M\mathbb{Z}^{d}$ and $\mathbb{Z}^{d}$ if and only if $M\left(
-1,1\right)  ^{d}\cap\mathbb{Z}^{d}=\left\{  \mathbf{0}\right\}  .$
\end{lemma}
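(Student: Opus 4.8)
The statement to prove is Lemma \ref{condition}, which claims that when $|\det M| = 1$, the parallelepiped $M[0,1)^d$ is a common fundamental domain for $M\mathbb{Z}^d$ and $\mathbb{Z}^d$ if and only if $M(-1,1)^d \cap \mathbb{Z}^d = \{\mathbf{0}\}$. The phrase ``the following is immediate'' in the excerpt tells us this is meant to be a direct corollary of Lemma \ref{common} applied in the special case $E = M[0,1)^d$, so the proof should be short and should not reprove anything.

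First I would invoke Lemma \ref{common} with the specific choice $E = M[0,1)^d$. That lemma says $E$ is a common fundamental domain for $M\mathbb{Z}^d$ and $\mathbb{Z}^d$ if and only if three conditions hold: $(E-E) \cap M\mathbb{Z}^d = \{\mathbf{0}\}$, $(E-E) \cap \mathbb{Z}^d = \{\mathbf{0}\}$, and $\mathrm{vol}(E) = 1$. So I need to check that, for this particular $E$, the first and third conditions are automatic, leaving only the second as a genuine hypothesis. The volume condition is trivial: $\mathrm{vol}(M[0,1)^d) = |\det M| = 1$ by assumption. For the first condition, compute $E - E = M[0,1)^d - M[0,1)^d = M\big([0,1)^d - [0,1)^d\big) = M(-1,1)^d$, using linearity of $M$; then $M(-1,1)^d \cap M\mathbb{Z}^d = M\big((-1,1)^d \cap \mathbb{Z}^d\big) = M\{\mathbf{0}\} = \{\mathbf{0}\}$, since $(-1,1)^d$ contains no nonzero integer vector and $M$ is injective.

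Having disposed of two of the three conditions, I am left with: $E = M[0,1)^d$ is a common fundamental domain if and only if $(E-E) \cap \mathbb{Z}^d = \{\mathbf{0}\}$, and since $E - E = M(-1,1)^d$ as computed above, this is exactly the condition $M(-1,1)^d \cap \mathbb{Z}^d = \{\mathbf{0}\}$, which is what we wanted. I do not anticipate any real obstacle here; the only thing to be careful about is the set identity $[0,1)^d - [0,1)^d = (-1,1)^d$ (true coordinatewise: $[0,1) - [0,1) = (-1,1)$) and the fact that $M$, being invertible, commutes with intersection in the sense that $M(A \cap B) = (MA) \cap (MB)$. Both are routine. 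One could alternatively bypass Lemma \ref{common} and argue directly from Lemma \ref{good pair} by noting that $N = M$ is an admissible choice there (since $|\det M| = 1$) and that the $M\mathbb{Z}^d$-packing condition is then automatic — but the cleanest route is the one through Lemma \ref{common} as the text indicates.
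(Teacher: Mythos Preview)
Your proof is correct and follows exactly the route the paper intends: apply Lemma \ref{common} with $E = M[0,1)^d$, observe that the volume condition and the $M\mathbb{Z}^d$-intersection condition are automatic (the latter via $M(-1,1)^d \cap M\mathbb{Z}^d = M\bigl((-1,1)^d \cap \mathbb{Z}^d\bigr) = \{\mathbf{0}\}$), and conclude that only the $\mathbb{Z}^d$-intersection condition remains. The paper gives no further details beyond ``appealing to Lemma \ref{common}, the following is immediate,'' so you have simply filled in what was left implicit.
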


\subsection{Constructing Good Pairs from Known Good Pairs}

\begin{lemma}
\label{times}Let $\Gamma=\Gamma_{1}\times\Gamma_{2}$ where $\Gamma_{1}$ is a
full-rank lattice in $\mathbb{R}^{n}$ and $\Gamma_{2}$ is a full-rank lattice
in $\mathbb{R}^{m}.$ If $E_{1}$ is a common fundamental domain for $\Gamma
_{1}$ and $\mathbb{Z}^{n}$ in $\mathbb{R}^{n}$ and $E_{2}$ is a common
fundamental domain for $\Gamma_{2}$ and $\mathbb{Z}^{m}$ in $\mathbb{R}^{m}$
then $E=E_{1}\times E_{2}$ is a common fundamental domain for $\Gamma$ and
$\mathbb{Z}^{n}\times\mathbb{Z}^{m}$ in $\mathbb{R}^{n}\times\mathbb{R}^{m}.$
\end{lemma}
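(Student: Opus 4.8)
The statement is a direct product assertion, so the natural approach is to verify the two defining properties of a common fundamental domain — the packing (tiling with empty overlaps) condition and the covering condition — separately for each of the two lattices $\Gamma_1 \times \{0\}$-type translates and then combine them coordinatewise. I would work directly with the definitions rather than with the indicator-function formulation, though the two are equivalent.

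First I would set up notation: write a generic element of $\Gamma = \Gamma_1 \times \Gamma_2$ as $\gamma = (\gamma_1, \gamma_2)$ with $\gamma_i \in \Gamma_i$, and similarly a point $x = (x_1, x_2) \in \mathbb{R}^n \times \mathbb{R}^m$. The key elementary observation is that $(E_1 \times E_2) + (\gamma_1, \gamma_2) = (E_1 + \gamma_1) \times (E_2 + \gamma_2)$, so that a product of translates intersects another product of translates exactly when both factor-intersections are nonempty: $\bigl((E_1+\gamma_1)\times(E_2+\gamma_2)\bigr) \cap \bigl((E_1+\lambda_1)\times(E_2+\lambda_2)\bigr) = \bigl((E_1+\gamma_1)\cap(E_1+\lambda_1)\bigr) \times \bigl((E_2+\gamma_2)\cap(E_2+\lambda_2)\bigr)$. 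For the packing property with respect to $\Gamma$: if $(\gamma_1,\gamma_2) \neq (\lambda_1,\lambda_2)$ then $\gamma_1 \neq \lambda_1$ or $\gamma_2 \neq \lambda_2$; in the first case $(E_1+\gamma_1)\cap(E_1+\lambda_1) = \emptyset$ since $E_1$ packs $\mathbb{R}^n$ by $\Gamma_1$, hence the product is empty, and symmetrically in the second case. The same argument with $\mathbb{Z}^n \times \mathbb{Z}^m$ in place of $\Gamma$ handles the second lattice. For the covering property: given any $x = (x_1,x_2)$, by hypothesis there exist $\gamma_1 \in \Gamma_1$ with $x_1 \in E_1 + \gamma_1$ and $\gamma_2 \in \Gamma_2$ with $x_2 \in E_2 + \gamma_2$, whence $x \in (E_1+\gamma_1)\times(E_2+\gamma_2) = E + (\gamma_1,\gamma_2)$; again the identical reasoning applies to $\mathbb{Z}^n \times \mathbb{Z}^m$. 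This shows $E = E_1 \times E_2$ is a common fundamental domain for $\Gamma$ and $\mathbb{Z}^{n+m}$.

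There is essentially no main obstacle here — the lemma is a routine unwinding of definitions, and the only point requiring any care is the set-theoretic identity relating products of translated sets, which I would state explicitly so that the forward reference from the proof of Lemma~\ref{common} (which invokes "a calculation similar to that found in the proof of Lemma~\ref{times}") is justified. If one prefers the indicator-function language to match Lemma~\ref{First}, I would instead note that $\chi_{E_1 \times E_2}(x_1,x_2) = \chi_{E_1}(x_1)\chi_{E_2}(x_2)$ and that $\sum_{(k_1,k_2)\in\mathbb{Z}^n\times\mathbb{Z}^m} \chi_{E}\bigl((x_1,x_2) + (\gamma(k_1),\gamma(k_2))\bigr)$ factors as a product of two sums, each identically $1$ by hypothesis; but the direct set-theoretic argument above is cleaner and is the version I would write.
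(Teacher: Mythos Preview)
Your proposal is correct and follows essentially the same approach as the paper: both reduce the packing condition on $E_1\times E_2$ to the factor packing conditions by splitting a hypothetical overlap into its two coordinate projections and observing that at least one coordinate of the translating lattice element must be nonzero. Your version is in fact more complete than the paper's, which writes out only the packing argument for $\mathbb{Z}^n\times\mathbb{Z}^m$ and leaves the covering property and the $\Gamma$-case implicit.
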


\begin{proof}
Indeed, let us assume that $E_{1}$ is a common fundamental domain for
$\Gamma_{1}$ and $\mathbb{Z}^{n}$ in $\mathbb{R}^{n},$ $E_{2}$ is a common
fundamental domain for $\Gamma_{2}$ and $\mathbb{Z}^{m}$ in $\mathbb{R}^{m},$
and there exist distinct $\gamma,k\in\mathbb{Z}^{n}\times\mathbb{Z}^{m}$ such
that the set $\left(  E+\gamma\right)  \cap\left(  E+k\right)  $ is not empty
($E=E_{1}\times E_{2}$). Then there exist $z,z^{\prime}\in E$ such that
$z+\gamma=z^{\prime}+k.$ Now, we write $z=\left(  x,y\right)  $, $z^{\prime
}=\left(  x^{\prime},y^{\prime}\right)  ,$ $\gamma=\left(  \gamma_{1}%
,\gamma_{2}\right)  $ and $k=\left(  k_{1},k_{2}\right)  .$ Thus,
\[
\left(  x,y\right)  +\gamma=\left(  x+\gamma_{1},y+\gamma_{2}\right)  =\left(
x^{\prime}+k_{1},y^{\prime}+k_{2}\right)  .
\]
As a result, $x+\gamma_{1}=x^{\prime}+k_{1}$ and $y+\gamma_{2}=y^{\prime
}+k_{2}$. Since $\gamma\neq k$ then either $\gamma_{1}\neq k_{1}$ or
$\gamma_{2}\neq k_{2}.$ So, we obtain that either $\gamma_{1}\neq k_{1}$ and
$x+\gamma_{1}=x^{\prime}+k_{1},$ or $\gamma_{2}\neq k_{2}$ and $y+\gamma
_{2}=y^{\prime}+k_{2}.$ This contradicts our assumption that $E_{1}$ is a
common fundamental domain for $\Gamma_{1}$ and $\mathbb{Z}^{n}$ in
$\mathbb{R}^{n}$ and $E_{2}$ is a common fundamental domain for $\Gamma_{2}$
and $\mathbb{Z}^{m}$ in $\mathbb{R}^{m}.$
\end{proof}

Appealing to Lemma \ref{times}, the following is immediate.

\begin{lemma}
\label{direct sum} Let $M_{1}$ and $M_{2}$ be two invertible matrices of order
$d.$ Assume that $\left(  M_{1}\mathbb{Z}^{n},\mathbb{Z}^{n}\right)  $ and
$\left(  M_{2}\mathbb{Z}^{m},\mathbb{Z}^{m}\right)  $ are good pairs of
lattices. If
\[
M=M_{1}\oplus M_{2}=\left[
\begin{array}
[c]{cc}%
M_{1} & \\
& M_{2}%
\end{array}
\right]
\]
then $\left(  M\left(  \mathbb{Z}^{n}\times\mathbb{Z}^{m}\right)
,\mathbb{Z}^{n}\times\mathbb{Z}^{m}\right)  $ is a good pair of lattices.
\end{lemma}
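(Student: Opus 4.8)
The plan is to reduce the statement to Lemma \ref{times} by observing that a Cartesian product of parallelepiped fundamental domains is again a parallelepiped. So the first step is simply to unpack the hypotheses: since $\left(M_1\mathbb{Z}^n,\mathbb{Z}^n\right)$ is a good pair of lattices, there is an invertible matrix $N_1$ of order $n$ such that $E_1:=N_1\left[0,1\right)^n$ is a common fundamental domain for $M_1\mathbb{Z}^n$ and $\mathbb{Z}^n$; likewise there is an invertible matrix $N_2$ of order $m$ with $E_2:=N_2\left[0,1\right)^m$ a common fundamental domain for $M_2\mathbb{Z}^m$ and $\mathbb{Z}^m$.

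The second step is to assemble the block-diagonal matrix $N:=N_1\oplus N_2$, which is invertible because its diagonal blocks are, and to record two routine identities. First, $N$ acts independently on the first $n$ coordinates and on the last $m$ coordinates, so
\[
N\left[0,1\right)^{n+m}=\left(N_1\left[0,1\right)^n\right)\times\left(N_2\left[0,1\right)^m\right)=E_1\times E_2 .
\]
Second, for the same reason applied to $M=M_1\oplus M_2$,
\[
M\left(\mathbb{Z}^n\times\mathbb{Z}^m\right)=\left(M_1\mathbb{Z}^n\right)\times\left(M_2\mathbb{Z}^m\right),
\]
so the lattice on the left is precisely the product lattice $\Gamma_1\times\Gamma_2$ with $\Gamma_1=M_1\mathbb{Z}^n$ and $\Gamma_2=M_2\mathbb{Z}^m$.

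The third and final step is to invoke Lemma \ref{times} with these choices of $\Gamma_1,\Gamma_2,E_1,E_2$: it gives that $E_1\times E_2$ is a common fundamental domain for $\Gamma_1\times\Gamma_2=M\left(\mathbb{Z}^n\times\mathbb{Z}^m\right)$ and $\mathbb{Z}^n\times\mathbb{Z}^m$. Combining this with the first identity above, $N\left[0,1\right)^{n+m}$ is a common fundamental domain for $M\left(\mathbb{Z}^n\times\mathbb{Z}^m\right)$ and $\mathbb{Z}^n\times\mathbb{Z}^m$ with $N$ invertible, which is exactly the assertion that $\left(M\left(\mathbb{Z}^n\times\mathbb{Z}^m\right),\mathbb{Z}^n\times\mathbb{Z}^m\right)$ is a good pair of lattices.

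I do not expect a genuine obstacle here; the only point that needs a moment's care is the block identity $N\left[0,1\right)^{n+m}=\left(N_1\left[0,1\right)^n\right)\times\left(N_2\left[0,1\right)^m\right)$, since it is what guarantees that the glued domain retains the parallelepiped shape demanded in the definition of a good pair, and everything else is bookkeeping plus the appeal to Lemma \ref{times}. (One may additionally note that the equal-volume condition forces $\left|\det M_1\right|=\left|\det M_2\right|=1$, hence $\left|\det N_1\right|=\left|\det N_2\right|=1$ and $\left|\det N\right|=1$, so $N$ may in fact be taken unimodular, though plain invertibility already suffices.)
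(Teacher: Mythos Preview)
Your proof is correct and follows exactly the approach the paper intends: the paper's own argument is the single sentence ``Appealing to Lemma~\ref{times}, the following is immediate,'' and you have simply spelled out the details that make it immediate (forming $N=N_1\oplus N_2$, noting $N[0,1)^{n+m}=E_1\times E_2$, and invoking Lemma~\ref{times}). There is nothing to add.
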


Given two matrices $A,B$ of order $a$ and $b$ respectively, such that
$A=\left[  A_{i,j}\right]  _{1\leq i,j\leq a}$ the \textbf{tensor product} (or
\textbf{Kronecker product}) of the matrices $A\otimes B$ is a matrix of order
$ab$ given by
\[
A\otimes B=\left[
\begin{array}
[c]{ccc}%
A_{11}B & \cdots & A_{1d}B\\
\vdots & \cdots & \vdots\\
A_{d1}B & \cdots & A_{dd}B
\end{array}
\right]  .
\]

\begin{lemma}
\label{tsor}Let $I_{p}$ be the identity matrix of order $p,$ and let $M$ be an
invertible matrix of order $d.$

\begin{enumerate}
\item If $\left(  M\mathbb{Z}^{d},\mathbb{Z}^{d}\right)  $ is a good pair of
lattices then $\left(  \left(  I_{p}\otimes M\right)  \mathbb{Z}%
^{pd},\mathbb{Z}^{pd}\right)  $ is a good pair of lattices.

\item If $\left(  M\mathbb{Z}^{d},\mathbb{Z}^{d}\right)  $ is a good pair of
lattices then $\left(  \left(  M\otimes I_{p}\right)  \mathbb{Z}%
^{pd},\mathbb{Z}^{pd}\right)  $ is a good pair of lattices.
\end{enumerate}
\end{lemma}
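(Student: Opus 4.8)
\textbf{Proof proposal for Lemma \ref{tsor}.}

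The plan is to reduce both statements to Lemma \ref{direct sum} by recognizing a tensor product with an identity matrix as an iterated direct sum of copies of a single block, after a suitable permutation. For Part (1), the matrix $I_p \otimes M$ is literally the block-diagonal matrix with $p$ copies of $M$ on the diagonal, i.e.\ $I_p \otimes M = M \oplus M \oplus \cdots \oplus M$ ($p$ summands). By hypothesis $(M\mathbb{Z}^d, \mathbb{Z}^d)$ is a good pair, so applying Lemma \ref{direct sum} inductively ($p-1$ times, grouping the blocks) shows that $\bigl((I_p\otimes M)(\mathbb{Z}^d)^p, (\mathbb{Z}^d)^p\bigr)$ is a good pair; and $(\mathbb{Z}^d)^p = \mathbb{Z}^{pd}$, which finishes Part (1). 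I would spell out the induction briefly: the base case $p=1$ is the hypothesis, and the inductive step writes $I_{p+1}\otimes M = (I_p \otimes M)\oplus M$ and invokes Lemma \ref{direct sum} with $M_1 = I_p\otimes M$ (good pair by induction, using that a good pair for $(\Gamma,\mathbb{Z}^{pd})$ persists) and $M_2 = M$.

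For Part (2), the matrix $M\otimes I_p$ is not block-diagonal, but it is conjugate to $I_p \otimes M$ by a permutation matrix: there is a permutation matrix $S$ of order $pd$ (the ``perfect shuffle'' that reorders coordinates) with $M \otimes I_p = S\,(I_p \otimes M)\,S^{-1}$. Since $S$ is an integral unimodular matrix and so is $S^{-1} = S^{tr}$, Proposition \ref{changebasis} Part (1) (applied to the good pair $(I_p\otimes M)\mathbb{Z}^{pd}, \mathbb{Z}^{pd}$ established in Part (1)) yields that $\bigl(S(I_p\otimes M)S^{-1}\mathbb{Z}^{pd}, \mathbb{Z}^{pd}\bigr) = \bigl((M\otimes I_p)\mathbb{Z}^{pd}, \mathbb{Z}^{pd}\bigr)$ is a good pair. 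So Part (2) follows from Part (1) together with the invariance of good pairs under left and right multiplication by integral unimodular matrices.

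The only mild obstacle is justifying the conjugation identity $M\otimes I_p = S(I_p\otimes M)S^{-1}$ and confirming that the shuffle matrix $S$ is integral and unimodular; this is a standard fact about Kronecker products (the commutation/shuffle matrix), and I would either cite it or verify it on basis vectors: if $\{u_i\}$ is the standard basis of $\mathbb{R}^p$ and $\{v_j\}$ that of $\mathbb{R}^d$, then $S$ is defined by $S(u_i \otimes v_j) = v_j \otimes u_i$, which is manifestly a permutation of the standard basis of $\mathbb{R}^{pd}$, hence integral with $\det S = \pm 1$, and a one-line computation checks that $S(I_p\otimes M)S^{-1}$ sends $v_j\otimes u_i = S(u_i\otimes v_j)$ to $S(u_i \otimes Mv_j) = (Mv_j)\otimes u_i = (M\otimes I_p)(v_j \otimes u_i)$. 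Everything else is a direct appeal to Lemma \ref{direct sum} and Proposition \ref{changebasis}, so no serious difficulty is expected.
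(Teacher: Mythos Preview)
Your proposal is correct and follows essentially the same approach as the paper: Part (1) is obtained by recognizing $I_p\otimes M$ as a block-diagonal matrix and applying Lemma \ref{direct sum} repeatedly, and Part (2) is deduced from Part (1) by exhibiting the shuffle permutation $S$ (defined on basis tensors by $e_i\otimes e_j\mapsto e_j\otimes e_i$) as an integral unimodular matrix conjugating $I_p\otimes M$ to $M\otimes I_p$, then invoking Proposition \ref{changebasis} Part (1). The verification you sketch on basis vectors is exactly the computation the paper carries out.
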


\begin{proof}
For Part $1$, we observe that
\[
I_{p}\otimes M=\left[
\begin{array}
[c]{ccc}%
M &  & \\
& \ddots & \\
&  & M
\end{array}
\right]  .
\]
Applying Lemma \ref{direct sum} an appropriate number of times gives us the
desired result. For Part $2,$ let
\[
M=\left[
\begin{array}
[c]{ccc}%
m_{11} & \cdots & m_{1d}\\
\vdots & \cdots & \vdots\\
m_{d1} & \cdots & m_{dd}%
\end{array}
\right]  \text{ and }I_{p}=\left[
\begin{array}
[c]{ccc}%
1 &  & \\
& \ddots & \\
&  & 1
\end{array}
\right]  .
\]
Then
\[
M\otimes I_{p}=\left[
\begin{array}
[c]{ccc}%
m_{11}I_{p} & \cdots & m_{1d}I_{p}\\
\vdots & \cdots & \vdots\\
m_{d1}I_{p} & \cdots & m_{dd}I_{p}%
\end{array}
\right]  .
\]
We shall show that $M\otimes I_{p}$ and $I_{p}\otimes M$ are similar matrices.
In other words, there exists an integral unimodular matrix $P$ such that:
\begin{equation}
P\left(  M\otimes I_{p}\right)  P^{-1}=\left(  I_{p}\otimes M\right)  .
\label{dirsum}%
\end{equation}
Indeed, let $\left\{  e_{i}\otimes e_{j}:1\leq i\leq d,1\leq j\leq p\right\}
$ be a basis for $\mathbb{R}^{d}\otimes\mathbb{R}^{p}.$ Define $Q:\mathbb{R}%
^{d}\otimes\mathbb{R}^{p}\rightarrow\mathbb{R}^{p}\otimes\mathbb{R}^{d}$ such
that $Q\left(  e_{i}\otimes e_{j}\right)  =e_{j}\otimes e_{i}.$ It is easy to
see that $Q$ is a linear isomorphism whose matrix is an integral unimodular
matrix. Moreover,
\begin{align*}
Q^{-1}\left(  M\otimes I_{p}\right)  Q\left(  e_{i}\otimes e_{j}\right)   &
=Q^{-1}\left(  M\otimes I_{p}\left(  e_{j}\otimes e_{i}\right)  \right) \\
&  =Q^{-1}\left(  Me_{j}\otimes e_{i}\right) \\
&  =e_{i}\otimes Me_{j}\\
&  =\left(  I_{p}\otimes M\right)  \left(  e_{i}\otimes e_{j}\right)  .
\end{align*}
Formula (\ref{dirsum}) is finally obtained by setting $Q=P^{-1}.$ Now, since
$\left(  \left(  I_{p}\otimes M\right)  \mathbb{Z}^{pd},\mathbb{Z}%
^{pd}\right)  $ is a good pair of lattices by Part $1$, it follows that
$\left(  \left(  M\otimes I_{p}\right)  \mathbb{Z}^{pd},\mathbb{Z}%
^{pd}\right)  $ is a good pair of lattices.
\end{proof}

\begin{proposition}
\label{tensor}Let $M$ be an invertible matrix of order $d.$ If $N$ is a
unimodular integral matrix of order $n$ and if $\left(  M\mathbb{Z}%
^{d},\mathbb{Z}^{d}\right)  $ is a good pair of lattices then $\left(  \left(
M\otimes N\right)  \mathbb{Z}^{dn},\mathbb{Z}^{dn}\right)  $ is a good pair of lattices.
\end{proposition}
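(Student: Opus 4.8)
The plan is to reduce the statement to Lemma \ref{tsor} by exploiting the mixed-product property of the Kronecker product. The key algebraic observation is the factorization
\[
M\otimes N=(M\otimes I_{n})(I_{d}\otimes N),
\]
which follows from the identity $(A\otimes B)(C\otimes D)=(AC)\otimes(BD)$ with $A=M$, $B=I_{n}$, $C=I_{d}$, $D=N$. So the first step is simply to record this factorization.

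Next I would check that $I_{d}\otimes N$ is an integral unimodular matrix of order $dn$: it is integral because it is block-diagonal with each diagonal block equal to the integral matrix $N$, and its determinant is $(\det N)^{d}=\pm1$ since $N$ is unimodular. By Lemma \ref{integral} Part 1, it follows that $(I_{d}\otimes N)\mathbb{Z}^{dn}=\mathbb{Z}^{dn}$, and therefore
\[
(M\otimes N)\mathbb{Z}^{dn}=(M\otimes I_{n})(I_{d}\otimes N)\mathbb{Z}^{dn}=(M\otimes I_{n})\mathbb{Z}^{dn}.
\]

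Finally, since $(M\mathbb{Z}^{d},\mathbb{Z}^{d})$ is a good pair of lattices by hypothesis, Lemma \ref{tsor} Part 2 (applied with $p=n$) yields that $((M\otimes I_{n})\mathbb{Z}^{dn},\mathbb{Z}^{dn})$ is a good pair of lattices. Combining this with the displayed equality above gives that $((M\otimes N)\mathbb{Z}^{dn},\mathbb{Z}^{dn})$ is a good pair of lattices, as desired. (An alternative packaging of the same argument is to invoke Proposition \ref{changebasis} Part 1 directly, with the unimodular integral matrix $T=I_{d}\otimes N$ applied to the order-$dn$ matrix $M\otimes I_{n}$, thereby avoiding the explicit appeal to Lemma \ref{integral}.)

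There is no serious obstacle here: the only points requiring any care are verifying that $I_{d}\otimes N$ is integral unimodular, so that right-multiplication by it does not change the lattice $\mathbb{Z}^{dn}$, and correctly applying the mixed-product identity for Kronecker products; everything else is an immediate citation of Lemma \ref{tsor}.
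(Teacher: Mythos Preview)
Your proof is correct and follows essentially the same route as the paper's: both factor $M\otimes N=(M\otimes I_{n})(I_{d}\otimes N)$, observe that $I_{d}\otimes N$ is integral unimodular so that $(M\otimes N)\mathbb{Z}^{dn}=(M\otimes I_{n})\mathbb{Z}^{dn}$, and then invoke Lemma~\ref{tsor} Part~2. Your write-up is in fact slightly more detailed than the paper's, since you spell out the mixed-product identity and the determinant computation for $I_{d}\otimes N$.
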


\begin{proof}
We observe that $M\otimes N=\left(  M\otimes I_{n}\right)  \left(
I_{d}\otimes N\right)  .$ If $N$ is a unimodular integral matrix of order $n$
then $I_{d}\otimes N$ is a unimodular integral matrix of order $dn.$ Next,
since $\left(  M\mathbb{Z}^{d},\mathbb{Z}^{d}\right)  $ is a good pair of
lattices then it follows from Lemma \ref{tsor} Part $2,$ that $\left(  \left(
M\otimes I_{p}\right)  \mathbb{Z}^{pd},\mathbb{Z}^{pd}\right)  $ is a good
pair. Now, since $I_{d}\otimes N$ is a unimodular integral matrix, appealing
to Lemma \ref{integral} we obtain the desired result: $\left(  M\otimes
I_{p}\right)  \mathbb{Z}^{pd}=\left(  M\otimes N\right)  \mathbb{Z}^{pd}.$
\end{proof}

\section{Proofs of Main Results}

\subsection{Proof of Proposition \ref{Main}}

The fact that $\left(  \Gamma_{1},\Gamma_{2}\right)  $ is a good pair of
lattices if and only if $\left(  \left(  M_{1}^{-1}M_{2}\right)
\mathbb{Z}^{d},\mathbb{Z}^{d}\right)  $ is a good pair is due to Part $2$ of
Proposition \ref{changebasis}. The fact that $\left(  \Gamma_{1},\Gamma
_{2}\right)  $ is a good pair of lattices is equivalent to the statement that
there exists a unimodular matrix $N$ such that $N\left(  -1,1\right)  ^{d}%
\cap\left(  M_{1}^{-1}M_{2}\right)  \mathbb{Z}^{d}=\left\{  \mathbf{0}%
\right\}  ,\text{ and }N\left(  -1,1\right)  ^{d}\cap\mathbb{Z}^{d}=\left\{
\mathbf{0}\right\}  $ is due to Lemma \ref{good pair}.

\subsection{Proof of Proposition \ref{unipotent}}

It suffices to show that $M\left[  0,1\right)  ^{d}$ is a common fundamental
domain for $M\mathbb{Z}^{d}$ and $\mathbb{Z}^{d}.$ First, let us assume that
$M$ is an upper triangular unipotent matrix. We will offer a proof by
induction on $d.$ For the base case, let us assume that $d=2.$ We define
\[
M_{s}=\left[
\begin{array}
[c]{cc}%
1 & s\\
0 & 1
\end{array}
\right]
\]
$\text{ for some }s\in\mathbb{R}.$

The fact that $M_{s}\left[  0,1\right)  ^{2}$ is a fundamental domain for the
lattice $M_{s}\mathbb{Z}^{2}$ is obvious. Now, let $z=\left[
\begin{array}
[c]{cc}%
x & y
\end{array}
\right]  ^{tr}\in\left(  -1,1\right)  ^{2}$ such that $M_{s}z=k=\left[
\begin{array}
[c]{cc}%
k_{1} & k_{2}%
\end{array}
\right]  ^{tr}\in\mathbb{Z}^{2}.$ We would like to show that $k_{1}=k_{2}=0.$
First, we observe that
\[
z=M_{s}^{-1}k=\left[
\begin{array}
[c]{cc}%
k_{1}-sk_{2} & k_{2}%
\end{array}
\right]  ^{tr}\in\left(  -1,1\right)  ^{2}.
\]
This is only possible if $k_{2}=k_{1}=0.$ Therefore $M_{s}\left(  -1,1\right)
^{2}\cap\mathbb{Z}^{2}=\left\{  \mathbf{0}\right\}  $ and by Lemma
\ref{condition}, $M_{s}\left[  0,1\right)  ^{2}$ is a common fundamental
domain for $M_{s}\mathbb{Z}^{2}$ and $\mathbb{Z}^{2}.$

Now, let us suppose that for all $d\leq m-1\in\mathbb{N}$ we have that
$M\left[  0,1\right)  ^{d}$ is a common fundamental domain for $M\mathbb{Z}%
^{d}$ and $\mathbb{Z}^{d}$ whenever $M$ is a unipotent matrix. More precisely,
let
\[
M=\left[
\begin{array}
[c]{ccccc}%
1 & a_{1} & a_{2} & \cdots & a_{m-1}\\
& 1 & a_{m} & \cdots & a_{2m-3}\\
&  & \ddots & \ddots & \vdots\\
&  &  & 1 & a_{\frac{_{m^{2}-m}}{2}}\\
&  &  &  & 1
\end{array}
\right]  \text{ }%
\]
be an arbitrary unipotent matrix of order $m$ with real entries. Let
\[
v=\left[
\begin{array}
[c]{cccc}%
a_{1} & a_{2} & \cdots & a_{m-1}%
\end{array}
\right]  ,\text{ }M_{1}=\left[
\begin{array}
[c]{cccc}%
1 & a_{m} & \cdots & a_{2m-3}\\
& \ddots & \ddots & \vdots\\
&  & 1 & a_{\frac{_{m^{2}-m}}{2}}\\
&  &  & 1
\end{array}
\right]
\]
so that
\[
M=\left[
\begin{array}
[c]{cc}%
1 & v\\
0 & M_{1}%
\end{array}
\right]  .
\]
Next, assume that for any given $z\in\left(  -1,1\right)  ^{m}$ we have that
$Mz\in\mathbb{Z}^{m}.$ We want to show that $z$ is the zero vector. Writing
\[
Mz=\left[
\begin{array}
[c]{cc}%
1 & v\\
0 & M_{1}%
\end{array}
\right]  \left[
\begin{array}
[c]{c}%
z_{1}\\
z_{2}%
\end{array}
\right]  =\left[
\begin{array}
[c]{c}%
z_{1}+\left\langle v,z_{2}\right\rangle \\
M_{1}z_{2}%
\end{array}
\right]  ,
\]
where $\left\langle v,z_{2}\right\rangle $ is the dot product of the vectors
$v,z_{2}$, it follows that $M_{1}z_{2}\in\mathbb{Z}^{m-1}.$ By the assumption
of the induction, then $z_{2}=0$ and it follows that $z_{1}+\left\langle
v,z_{2}\right\rangle =z_{1}\in\mathbb{Z}.$ Since $z\in\left(  -1,1\right)
^{m}$ then $z_{1}=0$ and this completes the induction.

Now, let us suppose that $M$ is a lower triangular unipotent matrix. Put
\[
J=\left[
\begin{array}
[c]{ccc}
&  & 1\\
& \udots & \\
1 &  &
\end{array}
\right]  .
\]
Notice that $JMJ^{-1}$ is an upper triangular matrix. Since $\left(
JMJ^{-1}\mathbb{Z}^{d},\mathbb{Z}^{d}\right)  $ is a good pair of lattices,
using the fact that $J$ is a unimodular integral matrix together with
Proposition \ref{changebasis}, Part $1,$ it follows that $\left(
M\mathbb{Z}^{d},\mathbb{Z}^{d}\right)  $ is a good pair of lattices as well.
This completes the proof of the first part.

\subsection{Proof of Proposition \ref{order3}}

Put
\[
M=M\left(  \mathbf{p}\right)  =\left[
\begin{array}
[c]{cccc}%
p_{1} & 1 &  & \\
& \ddots & \ddots & \\
&  & p_{d-1} & 1\\
&  &  & \frac{1}{p_{1}\cdots p_{d-1}}%
\end{array}
\right]  ,
\]
and
\[
N=N\left(  \mathbf{p}\right)  =\left[
\begin{array}
[c]{ccccc}
&  &  &  & 1\\
&  &  & 1 & p_{2}\\
&  & \udots & \udots & \\
& 1 & p_{d-1} &  & \\
1 & \frac{1}{p_{1}\cdots p_{d-1}} &  &  &
\end{array}
\right]  .
\]
We would like to show that $N\left[  0,1\right)  ^{d}$ is a common fundamental
domain for the pair $\left(  M\mathbb{Z}^{d},\mathbb{Z}^{d}\right)  .$ For
this purpose, it is enough to show (see Lemma \ref{common}) that $N\left(
-1,1\right)  ^{d}\cap\mathbb{Z}^{d}=\left\{  \mathbf{0}\right\}  $ and
\[
N\left(  -1,1\right)  ^{d}\cap M\mathbb{Z}^{d}=\left\{  \mathbf{0}\right\}  .
\]
In order to prove that $N\left(  -1,1\right)  ^{d}\cap M\mathbb{Z}%
^{d}=\left\{  \mathbf{0}\right\}  ,$ let us suppose that\ $Nv=Mk$,
$v\in\left(  -1,1\right)  ^{d}$ and $k\in\mathbb{Z}^{d}.$ It follows that
$M^{-1}Nv\in\mathbb{Z}^{d}.$ Computing the inverse of $M,$ we obtain
\begin{equation}
M^{-1}=\left[
\begin{array}
[c]{cccccc}%
\frac{1}{p_{1}} & \dfrac{\left(  -1\right)  ^{1}}{p_{1}p_{2}} & \dfrac{\left(
-1\right)  ^{2}}{p_{1}p_{2}p_{3}} & \cdots & \dfrac{\left(  -1\right)  ^{d-2}%
}{p_{1}p_{2}\cdots p_{d-1}} & \left(  -1\right)  ^{d-1}\\
& \dfrac{1}{p_{2}} & \dfrac{\left(  -1\right)  ^{1}}{p_{2}p_{3}} & \cdots &
\dfrac{\left(  -1\right)  ^{d-3}}{p_{2}\cdots p_{d-1}} & \left(  -1\right)
^{d-2}p_{1}\\
&  & \dfrac{1}{p_{3}} & \cdots & \dfrac{\left(  -1\right)  ^{d-2}}{p_{3}\cdots
p_{d-1}} & \left(  -1\right)  ^{d-3}p_{1}p_{2}\\
&  &  & \ddots & \vdots & \vdots\\
&  &  &  & \dfrac{1}{p_{d-1}} & \left(  -1\right)  ^{d-\left(  d-1\right)
}p_{1}\cdots p_{d-2}\\
&  &  &  &  & p_{1}\cdots p_{d-1}%
\end{array}
\right]  .
\end{equation}
Next, with some formal calculations we obtain that
\[
M^{-1}Nv=\left[
\begin{array}
[c]{cccccc}%
\left(  -1\right)  ^{d-1} &  &  &  &  & \\
\left(  -1\right)  ^{d-2}p_{1} &  &  &  &  & 1\\
\left(  -1\right)  ^{d-3}p_{1}p_{2} &  &  &  & \udots & \\
\vdots &  &  & 1 &  & \\
\left(  -1\right)  ^{d-\left(  d-1\right)  }p_{1}\cdots p_{d-2} &  & 1 &  &  &
\\
p_{1}\cdots p_{d-1} & 1 &  &  &  &
\end{array}
\right]  v=k\in\mathbb{Z}^{d}.
\]
Therefore, $v$ must be the zero vector.

To show that $N\left(  -1,1\right)  ^{d}\cap\mathbb{Z}^{d}=\left\{
\mathbf{0}\right\}  ,$ let $z\in\left(  -1,1\right)  ^{d}$ such that
$Nz=k\in\mathbb{Z}^{d}.$ More precisely, we have
\[
\left[
\begin{array}
[c]{ccccc}
&  &  &  & 1\\
&  &  & 1 & p_{2}\\
&  & \udots & \udots & \\
& 1 & p_{d-1} &  & \\
1 & \frac{1}{p_{1}\cdots p_{d-1}} &  &  &
\end{array}
\right]  \left[
\begin{array}
[c]{c}%
z_{1}\\
z_{2}\\
\vdots\\
z_{d-1}\\
z_{d}%
\end{array}
\right]  =\left[
\begin{array}
[c]{c}%
z_{d}\\
z_{d-1}+z_{d}\\
\vdots\\
z_{2}+z_{3}\\
z_{1}+\frac{z_{2}}{p_{1}\cdots p_{d-1}}%
\end{array}
\right]  =\left[
\begin{array}
[c]{c}%
k_{1}\\
k_{2}\\
\vdots\\
k_{d-1}\\
k_{d}%
\end{array}
\right]  .
\]
Now using the fact that $k\in\mathbb{Z}^{d}$ together with $z\in\left(
-1,1\right)  ^{d}$ gives us that $z$ must be equal to the zero vector.
Therefore, $N\left(  -1,1\right)  ^{d}\cap\mathbb{Z}^{d}=\left\{
\mathbf{0}\right\}  .$ In light of Proposition \ref{changebasis} Part $1$,
$\left(  \left(  PM\left(  \mathbf{p}\right)  Q\right)  \mathbb{Z}%
^{d},\mathbb{Z}^{d}\right)  $ is a good pair of lattices with common
fundamental domain $\left(  PN\left(  \mathbf{p}\right)  \right)  \left[
0,1\right)  ^{d}$ whenever $P,Q$ are integral unimodular matrices.

For Part $2,$ appealing again to Proposition \ref{changebasis} Part $1$ it is
enough to show that $\left(  D\left(  \mathbf{m}\right)  \mathbb{Z}%
^{d},\mathbb{Z}^{d}\right)  $ is a good pair of lattices. First, let
\[
Z=\left[
\begin{array}
[c]{ccccc}%
\dfrac{1}{m_{1}} & 1 &  &  & \\
& \dfrac{1}{m_{2}} & 1 &  & \\
&  & \ddots & \ddots & \\
&  &  & \dfrac{1}{m_{d-1}} & 1\\
&  &  &  & {\displaystyle\prod\limits_{i=1}^{d-1}}m_{i}%
\end{array}
\right]  .
\]
Next, applying the first part of the proposition it is clear that $\left(
Z\mathbb{Z}^{d},\mathbb{Z}^{d}\right)  $ is a good pair of lattices. Now, put
\[
U=\left[
\begin{array}
[c]{cccccc}%
1 & \left(  -1\right)  ^{1}m_{1} & \left(  -1\right)  ^{2}m_{1}m_{2} & \left(
-1\right)  ^{3}m_{1}m_{2}m_{3} & \cdots & \left(  -1\right)  ^{d-1}%
{\displaystyle\prod\limits_{i=1}^{d-1}}m_{i}\\
& 1 & \left(  -1\right)  ^{1}m_{2} & \left(  -1\right)  ^{2}m_{2}m_{3} &
\cdots & \left(  -1\right)  ^{d-2}{\displaystyle\prod\limits_{i=2}^{d-1}}%
m_{i}\\
&  & 1 & \left(  -1\right)  ^{1}m_{3} & \cdots & \left(  -1\right)
^{d-3}{\displaystyle\prod\limits_{i=3}^{d-1}}m_{i}\\
&  &  & \ddots & \ddots & \vdots\\
&  &  &  & 1 & \left(  -1\right)  ^{1}m_{d-1}\\
&  &  &  &  & 1
\end{array}
\right]  .
\]
Since $U$ is a unimodular integral matrix, then $Z\mathbb{Z}^{d}%
=ZU\mathbb{Z}^{d}$ (Lemma \ref{integral}). It is easy to check that $ZU$ is
equal to the diagonal matrix
\[
\left[
\begin{array}
[c]{ccccc}%
\dfrac{1}{m_{1}} &  &  &  & \\
& \dfrac{1}{m_{2}} &  &  & \\
&  & \ddots &  & \\
&  &  & \dfrac{1}{m_{d-1}} & \\
&  &  &  & {\displaystyle\prod\limits_{i=1}^{d-1}}m_{i}%
\end{array}
\right]  =D\left(  \mathbf{m}\right)  .
\]
Therefore $\left(  ZU\mathbb{Z}^{d},\mathbb{Z}^{d}\right)  =\left(  D\left(
\mathbf{m}\right)  \mathbb{Z}^{d},\mathbb{Z}^{d}\right)  $ is a good pair of
lattices. For the last part of the proposition, put
\[
S=\left[
\begin{array}
[c]{cc}%
0 & 1\\
1 & \frac{n}{m}%
\end{array}
\right]  ,S^{\prime}=\left[
\begin{array}
[c]{cc}%
\frac{m}{n} & 1\\
0 & \frac{n}{m}%
\end{array}
\right]  .
\]
We claim that $S\left[  0,1\right)  ^{2}$ is a common fundamental domain for
the lattices $S^{\prime}\mathbb{Z}^{2}$ and $\mathbb{Z}^{2}.$ To see this, it
suffices (see Proposition \ref{Main}) to check that $S\left(  -1,1\right)
^{2}\cap\mathbb{Z}^{2}=\left\{  \mathbf{0}\right\}  $ and $S\left(
-1,1\right)  ^{2}\cap S^{\prime}\mathbb{Z}^{2}=\left\{  \mathbf{0}\right\}  .$
Let $z=\left[
\begin{array}
[c]{cc}%
x & y
\end{array}
\right]  ^{tr}\in\left(  -1,1\right)  ^{2}$ such that $Sz\in\mathbb{Z}^{2}.$
Then $z=S^{-1}k=\left[
\begin{array}
[c]{cc}%
k_{2}-\frac{nk_{1}}{m} & k_{1}%
\end{array}
\right]  ^{tr}$ where $k=\left[
\begin{array}
[c]{cc}%
k_{1} & k_{2}%
\end{array}
\right]  ^{tr}\in\mathbb{Z}^{2}.$ So, $z=\mathbf{0}.$ Next, let us assume that
$Sz\in S^{\prime}\mathbb{Z}^{2}.$ That is, $Sz=S^{\prime}k$ for some
$k=\left[
\begin{array}
[c]{cc}%
k_{1} & k_{2}%
\end{array}
\right]  ^{tr}\in\mathbb{Z}^{2}.$ It follows that
\[
z=\left[
\begin{array}
[c]{cc}%
x & y
\end{array}
\right]  ^{tr}=S^{-1}S^{\prime}\left[
\begin{array}
[c]{cc}%
k_{1} & k_{2}%
\end{array}
\right]  ^{tr}=\left[
\begin{array}
[c]{cc}%
-k_{1} & \frac{mk_{1}+nk_{2}}{n}%
\end{array}
\right]  ^{tr}.
\]
Thus $z=\mathbf{0}$ as well. \newline Next, since $\gcd\left(  m,n\right)  =1$
there exist $\ell_{1},\ell_{2}\in\mathbb{Z}$ such that $1-n\ell_{2}-m\ell
_{1}=0.$ As such, it follows that
\begin{align*}
\left[
\begin{array}
[c]{cc}%
1 & -m\ell_{2}\\
0 & 1
\end{array}
\right]  \left[
\begin{array}
[c]{cc}%
\frac{m}{n} & 1\\
0 & \frac{n}{m}%
\end{array}
\right]  \left[
\begin{array}
[c]{cc}%
1 & -\ell_{1}n\\
0 & 1
\end{array}
\right]   &  =\left[
\begin{array}
[c]{cc}%
\frac{m}{n} & 1-n\ell_{2}-m\ell_{1}\\
0 & \frac{1}{m}n
\end{array}
\right] \\
&  =\left[
\begin{array}
[c]{cc}%
\frac{m}{n} & 0\\
0 & \frac{n}{m}%
\end{array}
\right]  .
\end{align*}
By Proposition \ref{changebasis}, Part $1,$%
\begin{equation}
\left(  \left[
\begin{array}
[c]{cc}%
\frac{m}{n} & 0\\
0 & \frac{n}{m}%
\end{array}
\right]  \mathbb{Z}^{2},\mathbb{Z}^{2}\right)  . \label{gp}%
\end{equation}
Using the fact that
\[
\left[
\begin{array}
[c]{cc}%
1 & -\ell_{1}n\\
0 & 1
\end{array}
\right]  \mathbb{Z}^{2}=\mathbb{Z}^{2}%
\]
together with Lemma \ref{First}, we conclude that
\[
\left[
\begin{array}
[c]{cc}%
1 & -m\ell_{2}\\
0 & 1
\end{array}
\right]  \left[
\begin{array}
[c]{cc}%
0 & 1\\
1 & \frac{n}{m}%
\end{array}
\right]  \left[  0,1\right)  ^{2}=\left[
\begin{array}
[c]{cc}%
-m\ell_{2} & 1-n\ell_{2}\\
1 & \frac{n}{m}%
\end{array}
\right]  \left[  0,1\right)  ^{2}%
\]
is a common connected fundamental domain for the pair (\ref{gp}). Finally, the
first part of Proposition \ref{changebasis} gives the desired result.

\subsection{Proof of Proposition \ref{notgood}}

According to Proposition \ref{changebasis}, it is enough to show that if $r$
is a natural number such that $\sqrt{r}$ is irrational then $\left(  R\left(
r\right)  \mathbb{Z}^{2},\mathbb{Z}^{2}\right)  $ is not a good pair of
lattices. Put $N=\left[
\begin{array}
[c]{cc}%
a & b\\
c & d
\end{array}
\right]  $ such that $\left\vert \det N\right\vert =1.$ Let us suppose that
$\Omega=N\left[  0,1\right)  ^{2}$ is a common fundamental domain for
$\mathbb{Z}^{2}$ and $R\mathbb{Z}^{2}=R\left(  r\right)  \mathbb{Z}^{2}.$
There must exist a non-zero element $k$ in $\mathbb{Z}^{2}$ such that one
corner of the closure of the set $\Omega+k$ meet $\Omega$ at the origin.
Similarly, since $\Omega$ tiles the plane by $R(r)\mathbb{Z}^{2},$ there must
exist a non-trivial element $k$ of $R(r)\mathbb{Z}^{2}$ such that one corner
of the closure of $\Omega+k$ intersects $\Omega$ at the origin as well (see
Figure below)

\begin{center}
\begin{figure}[ptbh]
\includegraphics[scale=0.5]{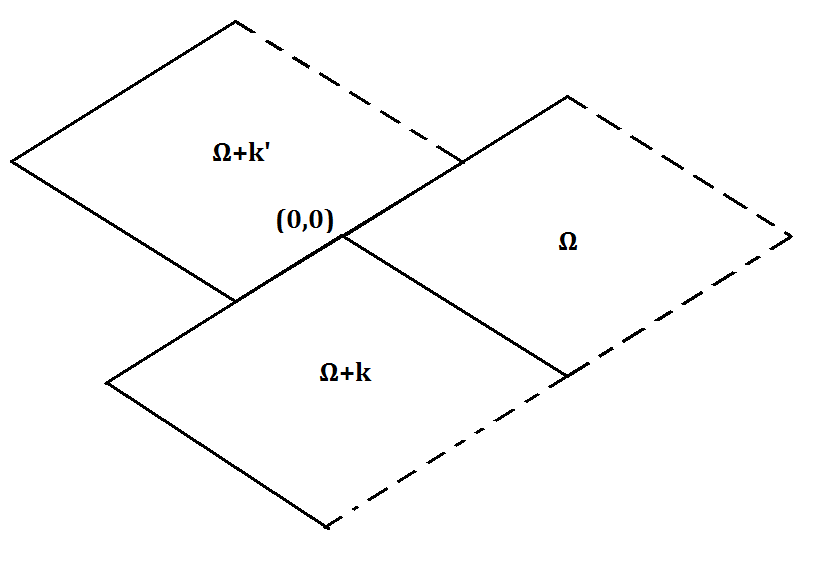}\caption{Behavior of a tiling
around the origin of the plane}%
\end{figure}
\end{center}

Hence, there exist
\[
p,q\in\left\{  \left[
\begin{array}
[c]{c}%
1\\
0
\end{array}
\right]  ,\left[
\begin{array}
[c]{c}%
0\\
1
\end{array}
\right]  \right\}
\]
such that
\begin{equation}
\left\{
\begin{array}
[c]{c}%
Np+k=\mathbf{0}\\
Nq+j=\mathbf{0}%
\end{array}
\right.  \label{system}%
\end{equation}
for some $k\in\mathbb{Z}^{2}-\left\{  \mathbf{0}\right\}  $ and $j\in
R\mathbb{Z} ^{2}-\left\{  \mathbf{0}\right\} .$ By assumption, $N$ is a
unimodular matrix. However, without loss of generality, we may assume that
$\det N=1$. Indeed, if $\det N=-1,$ then (\ref{system}) is equivalent to
$\left\{
\begin{array}
[c]{c}%
JNp+Jk=\mathbf{0}\\
JNq+Jj=\mathbf{0}%
\end{array}
\right.  $ where $\det\left(  JN\right)  =1,$ $Jk\in\mathbb{Z}^{2}-\left\{
\mathbf{0}\right\}  ,Jj\in R\mathbb{Z}^{2}-\left\{  \mathbf{0}\right\}  $ and
$J=\left[
\begin{array}
[c]{cc}%
-1 & 0\\
0 & 1
\end{array}
\right]  .$

We shall prove that if $\sqrt{r}\notin\mathbb{Q}$ then (\ref{system}) has no
solution. There are several possible cases that may arise from all the
possible choices for $p,q$. First of all, since $R\mathbb{Z}^{2}\cap
\mathbb{Z}^{2}=\left\{  \mathbf{0}\right\}  $, it is easy to see that
(\ref{system}) has no solution whenever $p=q.$ Therefore, we should only focus
on the cases where $p$ is not equal to $q.$ Put
\[
k=\left[
\begin{array}
[c]{cc}%
k_{1} & k_{2}%
\end{array}
\right]  ^{tr}\text{ and }j=\left[
\begin{array}
[c]{cc}%
\sqrt{r}j_{1} & \frac{1}{\sqrt{r}}j_{2}%
\end{array}
\right]  ^{tr}\text{ where }k_{1,}k_{2},j_{1},j_{2}\in\mathbb{Z}.
\]
\textbf{Case $1.1$} If
\[
p=\left[
\begin{array}
[c]{c}%
1\\
0
\end{array}
\right]  ,q=\left[
\begin{array}
[c]{c}%
0\\
1
\end{array}
\right]  ,N=\left[
\begin{array}
[c]{cc}%
\frac{bc+1}{d} & b\\
c & d
\end{array}
\right]
\]
and $d\neq0$ then
\[
k_{1}=\frac{j_{1}k_{2}r+\sqrt{r}}{j_{2}},k_{2}=-c,j_{1}=\frac{-b}{\sqrt{r}%
},j_{2}=-d\sqrt{r}%
\]
and $k_{1}j_{2}-j_{1}k_{2}r=\sqrt{r}.$ Thus, System (\ref{system}) has no
solution since $\sqrt{r}$ is irrational. \newline\textbf{Case $1.2$} If
\[
p=\left[
\begin{array}
[c]{c}%
1\\
0
\end{array}
\right]  ,q=\left[
\begin{array}
[c]{c}%
0\\
1
\end{array}
\right]  ,N=\left[
\begin{array}
[c]{cc}%
a & b\\
-\frac{1}{b} & 0
\end{array}
\right]
\]
and $b\neq0$ then
\[
k_{1}=a,k_{2}=-\frac{1}{b},j_{1}=\frac{b}{\sqrt{r}},j_{2}=0
\]
and $j_{1}=-\frac{1}{k_{2}\sqrt{r}},k_{2}\neq0.$ This is absurd since $j_{1}$
is an integer. \newline\textbf{Case $2.1$} If
\[
p=\left[
\begin{array}
[c]{c}%
0\\
1
\end{array}
\right]  ,q=\left[
\begin{array}
[c]{c}%
1\\
0
\end{array}
\right]  ,N=\left[
\begin{array}
[c]{cc}%
\frac{bc+1}{d} & b\\
c & d
\end{array}
\right]  ,d\neq0
\]
then
\[
k_{1}=-b,k_{2}=-d,j_{1}=\frac{-1-bc}{d\sqrt{r}},j_{2}=-c\sqrt{r}%
\]
and $k_{2}j_{1}r-j_{2}k_{1}=\sqrt{r}$ which is absurd. \newline\textbf{Case
$2.2$} If
\[
p=\left[
\begin{array}
[c]{c}%
0\\
1
\end{array}
\right]  ,q=\left[
\begin{array}
[c]{c}%
1\\
0
\end{array}
\right]  ,N=\left[
\begin{array}
[c]{cc}%
a & b\\
-\frac{1}{b} & 0
\end{array}
\right]  ,b\neq0
\]
then
\[
k_{1}=b,k_{2}=0,j_{1}=\frac{a}{\sqrt{r}},j_{2}=-\frac{\sqrt{r}}{b}.
\]
Therefore, $j_{2}=-\frac{\sqrt{r}}{k_{1}}$ and this is absurd.

\end{document}